\documentclass[a4paper,10pt]{article}
\usepackage[hmarginratio=1:1, bottom=3.5cm, top=2.5cm]{geometry}
\usepackage{amssymb,amsthm,amsmath,latexsym,url,graphicx, comment}
\usepackage[english]{babel} 
\usepackage[dvipsnames]{xcolor}
\usepackage[utf8]{inputenc}
\usepackage[sc,osf]{mathpazo}
\usepackage[euler-digits]{eulervm}
\usepackage[T1]{fontenc}
\usepackage{mathtools}
 \usepackage{enumitem} 
\usepackage{pdfpages}
\usepackage{float}
\usepackage[colorlinks=true,pdftex,unicode=true,linktocpage,bookmarksopen,hypertexnames=false]{hyperref}
\theoremstyle{plain}

\newtheorem{theorem}{Theorem}[section]
\newtheorem{lemma}[theorem]{Lemma}
\newtheorem{proposition}[theorem]{Proposition}
\newtheorem{corollary}[theorem]{Corollary}
\theoremstyle{remark}
\newtheorem{remark}[theorem]{Remark}

\newcommand{\F}{\mathbb F}
\newcommand{\Z}{\mathbb Z}
\newcommand{\Aut}{\operatorname{Aut}}
\newcommand{\Core}{\operatorname{Core}}
\newcommand{\Fix}{\operatorname{Fix}}
\newcommand{\Stab}{\operatorname{Stab}}
\newcommand{\FC}{\operatorname{FC}}

\newcommand{\Triv}{\operatorname{Triv}}
\newcommand{\FSB}{\operatorname{FSB}}

\title{Finite-index problems in skew braces}
\author{Massimiliano Di Matteo \and Maria Ferrara}
\date{ }

\begin{document}
\maketitle

\begin{abstract} 
We investigate finite-index problems in skew braces. For every sub-skew brace \(A\) of a skew brace \(B\), we prove that finite additive index is equivalent to finite multiplicative index; whenever these indices are finite, they coincide. This answers Question~3.7 of \cite{CPV} affirmatively. We then construct a left brace with a strong left ideal of index \(3\) containing no finite-index ideal, giving a negative answer to Question~3.6 of \cite{CPV}. We also show that finite additive and multiplicative conjugacy classes, together with a finite \(\lambda\)-orbit, force an element to be an \((s)\)-element, thereby answering Question~5.21 of \cite{CPV}. Finally, for every \(n\geq 3\), a one-generated free right-nilpotent skew brace of class \(n\), introduced in \cite{Free}, has an index-\(2\) ideal that is not finitely generated as a skew brace, although it is finitely generated as an ideal.\\

\noindent {\bf Keywords:} left brace, skew brace, ideal, finite index

\bigskip
\noindent {\bf 2020 Mathematics Subject Classification:} 16T25, 20E07 
\end{abstract}

\vspace{1cm}
\section{Introduction}
Finite-index conditions play a fundamental role in algebra. In group theory, for instance, they control normal cores and finite generation. For skew braces, however, every such question involves two interacting group structures, and familiar group-theoretic principles split into genuinely different problems.

The purpose of this paper is to answer some finite-index questions for skew braces. Our first result compares the two group indices. For an arbitrary sub-skew brace \(A\) of \(B\), we prove
\[
   |B:A|_+<\infty \quad\Longleftrightarrow\quad |B:A|_\circ<\infty,
\]
and the two indices are equal whenever they are finite. This completely answers \cite[Question~3.7]{CPV}. This extends the index-property previously established for locally almost polycyclic
skew braces \cite{poly}, including locally supersoluble skew braces, in \cite[Corollary 4.17]{supersoluble}.

The second problem concerns ideal cores. It was known that every finite-index sub-skew brace contains a finite-index strong left ideal, and an ideal can be obtained under additional hypotheses \cite[Corollary~3.5]{CPV}. We show that these hypotheses cannot be removed: there exists a left brace \(B\) with a strong left ideal \(A\) of index \(3\) such that every ideal contained in \(A\) has infinite index. This answers Question~3.6 of \cite{CPV} negatively. . For comparison, every index-2 sub-skew brace of a locally supersoluble skew brace is an ideal \cite[Theorem 4.18]{supersoluble}.

The two problems above had already been solved simultaneously for almost polycyclic
skew braces: every finite-index sub-skew brace contains a finite-index ideal, and finite addi-
tive index is equivalent to finite multiplicative index, with the same \cite[Corollaries 3.11 and 3.12]{poly}. Thus our first theorem removes the almost-polycyclic hypothesis from the comparison of indices, whereas our counterexample shows that no analogous unrestricted ideal-core
theorem can hold.

Our third contribution concerns finite conjugacy and fixed points. If the additive and multiplicative conjugacy classes of an element \(x\), together with its \(\lambda\)-orbit, are finite, we prove that \(\operatorname{Fix}(\lambda_x)\) contains a finite-index left ideal. It follows that \(x\) is an \((s)\)-element, yielding an affirmative answer to Question~5.21 of \cite{CPV}.

Finally, we investigate the analogue of Schreier's lemma \cite{Schreier}. Although finite-index sub-skew braces are finitely generated whenever one underlying group of the ambient brace is finitely generated, the conclusion fails under finite generation merely as a skew brace. For every \(n\geq 3\), a one-generated free right-nilpotent skew brace of class \(n\) contains an index-\(2\) ideal that is not finitely generated as a skew brace.

Section~2 fixes notation and terminology. Section~3 proves the index-comparison theorem, Section~4 constructs the ideal-core counterexample, Section~5 settles the fixed-point problem, and Section~6 contains the positive and negative finite-generation results.

\section{Preliminaries}
Let $B=(B,+,\circ)$ be a (left) skew brace \cite{GV}. Thus $(B, +)$ and $(B, \circ)$ are groups and $$a \circ (b+c) = a \circ b-a+a \circ c,$$ for all $a, b, c \in B$.
If the additive group $(B, +)$ is abelian, then $B$ is called a \textit{(left) brace}. Historically, the latter form a really important class of skew braces, as they were the first to be defined in \cite{Rump}, and paved the way for the study of skew brace and their relationship with the Yang--Baxter Equation.  
We write $\bar{a}$ for the inverse of $a$ in $(B,\circ)$ and reserve $-a$ for its inverse in $(B,+)$. The maps
\[
\lambda_a(b)=-a+a\circ b
\]
define a homomorphism
\[
\lambda\colon (B,\circ)\longrightarrow \operatorname{Aut}(B,+).
\]
We also set
\[
a*b=\lambda_a(b)-b.
\]

The most general substructure of a skew brace is the \textit{sub-skew brace}, which is just a subgroup of both the additive and the multiplicative group. A {\it left ideal} is a subgroup of $(B,+)$ invariant under the map $\lambda_a$ for all $a$ in $B$. It is called a \emph{strong left ideal} if it is normal in $(B,+)$, and an \emph{ideal} if it is, in addition, normal in $(B,\circ)$. Ideals are exactly the class of substructures that induce a well posed skew brace structure for the quotient set $B/I$.

The notion of index is from \cite[Definition~4.22]{BBEFPT}; in particular, given a sub-skew brace $S$ of $B$, if $|(B,+):(S,+)|=|(B,\circ):(S,\circ)|$, then we can define the {\it index} of $S$ in $B$ as the cardinal number $|B:S|:=|(B,+):(S,+)|=|(B,\circ):(S,\circ)|$.
It is not clear if one can actually define the index for any sub-skew brace. The restriction of this problem to the finite case is called \textit{the finite-index problem} \cite[Question~3.7]{CPV} and can be rephrased as follows
\begin{center}
    \textit{Let $(B,+,\circ)$ a skew brace and let $S$ be a sub-skew brace of $B$, then $|(B,+):(S,+)|<\infty$ if and only if $|(B,\circ):(S,\circ)|<\infty$.}
\end{center}
The index is well-defined for serial sub-skew braces and hence for finite-index sub-skew braces of locally centrally nilpotent left skew braces \cite[Lemma~4.24 and Theorem~4.25]{BBEFPT}. Moreover, the two groups indices are known to be equal whenever $S$ is a left ideal \cite[Remark~3.1]{CPV} or whenever both are finite \cite[Theorem~3.3]{CPV}. In section 3, we prove that this is always the case when just one of the two indices is finite, as a positive answer to the finite-index problem is given. 

In \cite{BBEFPT}, the finite-index core problem is addressed by proving that the maximal ideal of $B$ contained in $C$ has finite index whenever $B/\zeta_2(B)$ is finite \cite[Theorem~4.26]{BBEFPT}. Furthermore, it was proved in \cite[Proposition~3.2]{CPV} that every finite-index
sub-skew brace contains a finite-index strong left ideal.  For two-sided skew braces they obtained an
ideal \cite[Corollary~3.5]{CPV}, and asked whether this holds in general
\cite[Question~3.6]{CPV}.  Di Matteo, Esteban--Romero, Ferrara and
P\'erez--Calabuig posed the equivalent core problem
\cite[Question~3.9]{DMEFP} and proved it under finite-generation and
finite-index assumptions on the upper socle series or the hypercentre
\cite[Proposition~3.10 and Corollary~3.11]{DMEFP}. 
The following theorem gives
a negative answer, already among left braces.

\begin{theorem}\label{thm:no-ideal-core}
There exist a left brace $B$ and a strong left ideal $A$ of $B$ such that
\[
  |B:A|=3,
\]
but every ideal $I$ of $B$ contained in $A$ has infinite index in $B$.
\end{theorem}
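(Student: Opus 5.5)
The plan is to build $B$ as a semidirect-product-type left brace on an infinite abelian group, and to take $A$ as the kernel of a $\lambda$-invariant homomorphism onto $\Z/3\Z$, with the multiplicative conjugation action designed to spread any finite-index ideal outside $A$. The concrete construction below is a first attempt whose central step is not yet checked.

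\textbf{Ansatz.} Let $V=\F_3^{(\Z)}$, identified with the Laurent polynomials $\F_3[t,t^{-1}]$. Put $B=V\oplus \Z$ as an additive group, with
\[
(v,m)\circ(w,n)=\bigl(v+\tau^m(w),\,m+n\bigr).
\]
Here $\tau$ is an $\F_3$-linear automorphism of $V$. Then $\lambda_{(v,m)}=\tau^m\oplus \mathrm{id}$, and $m\mapsto \tau^m$ is a homomorphism, so $B$ is a left brace by the usual semidirect-product construction. A left ideal is then an additive subgroup invariant under $\tau$ on the $V$-coordinate. I would take
\[
A=\{(v,m): f(v)=0\}
\]
for a $\tau$-invariant surjective functional $f\colon V\to\F_3$. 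This $A$ is automatically a strong left ideal of index $3$, by \cite[Remark~3.1]{CPV}.

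\textbf{Reduction for an ideal $I\subseteq A$ of finite index.} In this order, I would:
\begin{enumerate}[label=(\roman*)]
\item Observe that $I$ contains some $(w,n)$ with $n\neq 0$.
\item Conjugate $(w,n)$ by $(u,0)$ in $(B,\circ)$. This gives
\[
(u+w-\tau^n u,\,n)\in I,
\]
so $(1-\tau^n)V\subseteq I$.
\item Also, $I\cap V$ is a finite-index $\tau$-invariant subspace.
\end{enumerate}
To reach a contradiction, one needs $(1-\tau^n)V$ together with the $\tau$-closure to force an element with $f\neq 0$.

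\textbf{The main obstacle.} $\tau$-invariance of $f$ gives $f\bigl((1-\tau)V\bigr)=0$, so with the plain semidirect product the ideal $(1-\tau^n)V\oplus n\Z$ survives inside $A$. Hence the naive ansatz fails, and the genuine step is to twist the construction. My first attempt would be to make the $\Z$-part interact with $V$ through a cocycle, i.e.
\[
(v,m)\circ(w,n)=\bigl(v+\tau^m w+c(m,n),\,m+n\bigr),
\]
or alternatively to replace $\Z$ by a group acting through a non-abelian quotient, for example the infinite dihedral group. The cocycle $c$ (or the non-abelian action) would be chosen so that conjugation produces the elements $c(\cdot,\cdot)$ with $f(c)\neq 0$, while $\lambda$, which only sees the linear part, still preserves $\ker f$.

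\textbf{Verification steps for the twisted version.} Once such a twist is fixed, I would check:
\begin{enumerate}[label=(\alph*)]
\item the brace axiom $a\circ(b+c)=a\circ b-a+a\circ c$, which reduces to an additivity condition on the map $b\mapsto c(m,\cdot)$;
\item that $\lambda$ preserves $A$;
\item the core computation: every finite-index normal subgroup of $(B,\circ)$ contains a commutator of the form $(c(\ldots),0)$ with $f\neq 0$, hence is not contained in $A$.
\end{enumerate}
Step (c) is where I expect the real difficulty.
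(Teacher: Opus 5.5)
\textbf{There is a genuine gap.} Your proposal does not construct an example, and the framework you commit to cannot produce one.

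\textbf{Why a $\lambda$-invariant $f$ is fatal.} Suppose $f\colon(B,+)\to\F_3$ is additive and $f\circ\lambda_b=f$ for all $b$; this is what $\tau$-invariance gives in your ansatz. Then
$f(a\circ b)=f(a+\lambda_a(b))=f(a)+f(b)$, so $f$ is a skew brace homomorphism onto $\Triv(\F_3)$. Hence $A=\ker f$ is itself an ideal of index $3$. In your untwisted model it is therefore not only $(1-\tau^n)V\oplus n\Z$ that survives inside $A$, but all of $A$.

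\textbf{The cocycle twist cannot help.} You say $\lambda$ only sees the linear part, but in fact $\lambda_{(v,m)}(0,n)=(c(m,n),n)$. Since $(0,n)\in A$, $\lambda$-invariance of $A$ forces $f(c(m,n))=0$. Then $f$ is again $\lambda$-invariant, $A$ is again an ideal, and your step (c) must fail. The same happens for any other acting group, dihedral or not, as long as $f$ itself is $\lambda$-invariant.

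\textbf{The sign is forced.} Since $A$ is a strong left ideal with $B/A\cong C_3$ and $\Aut(C_3)=C_2$, the $\lambda$-action induces a character $\chi\colon(B,\circ)\to C_2$. If $\chi$ is trivial, $A$ is an ideal. So any genuine example must have $\lambda$ acting on $B/A$ by inversion through a nontrivial character. This is exactly the role of $\varepsilon_c(q)=(-1)^{\chi_\infty(c)}q$ in the paper.

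\textbf{The sign alone is still not enough.} Take your own model with $f\circ\tau=-f$ instead. Then $\{(v,m):f(v)=0,\ m\in2\Z\}$ is an ideal of index $6$ contained in $A$.

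\textbf{What the paper actually needs.} In the paper's $B=C_3\times C$, the ideals contained in $A=\{0\}\times C$ are exactly the sets $\{0\}\times K$ with $K$ an ideal of $C$ contained in $\ker\chi_\infty$. So the real content is a left brace $C$ with a multiplicative character whose kernel contains no finite-index ideal.

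The paper builds this in two steps. First, an explicit brace $F$ of order $16$ has a character $\chi$ such that every left ideal inside $\ker\chi$ lies in an ideal $J$ with $(F/J,\circ)$ non-abelian. Second, it takes $C=\bigoplus_{n\geq1}F_n$. A finite quotient $C/K$ with $K\subseteq\ker\chi_\infty$ would then contain infinitely many pairwise distinct, pairwise commuting, non-abelian subgroups, namely the images of the $F_n$, which is impossible.

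Neither of these two ingredients (the inversion action on $B/A$, and a $C$ whose character kernel contains no finite-index ideal) appears in your proposal.
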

The notion of property~$(S)$ and its finite-conjugacy framework were
introduced by Colazzo, Ferrara and Trombetti in \cite{CFT};
Question~5.21 itself was subsequently formulated in \cite{CPV}.
In general, conjugacy-like arguments for skew braces must also take
into account the behavior of the lambda function. For $x\in B$, set
\begin{align*}
 [x]_+&=\{-a+x+a:a\in B\},\\
 [x]_\circ&=\{a\circ x\circ\bar a:a\in B\},\\
 [x]_\lambda&=\{\lambda_a(x):a\in B\}.
\end{align*}
The set of elements with finite $\lambda$-orbit is denoted
$\lambda_f(B)$, while $\FC(B,+)$ and $\FC(B,\circ)$ are the two
FC-centres.  We use
\[
 \theta_f(B)=\lambda_f(B)\cap\FC(B,+).
\]
Equivalently, $x\in\theta_f(B)$ precisely when its orbit
\[
 [x]_\theta=\{a+\lambda_b(x)-a:a,b\in B\}
\]
is finite.
An element $x$ is an $(s)$-element if each of the four sets
\[
 \{g*x:g\in B\},\quad \{x*g:g\in B\},\quad [x]_+,
 \quad [x]_\circ
\]
is finite.

If the additive and multiplicative conjugacy classes
and the $\lambda$-orbit of $x$ are finite, Question~5.21 of \cite{CPV} asks
whether the remaining right star orbit is finite.  We prove more: the
fixed-point subgroup of $\lambda_x$ contains a left ideal of finite
index, with an explicit bound.  Consequently, Question~5.21 has an
affirmative answer for arbitrary skew braces; see
Theorem~\ref{thm:s-element}.

\bigskip
The last question is a skew-brace version of the classical Schreier's lemma: a finite-index subgroup of a finitely generated group is finitely
generated.  It has a positive answer as soon as either underlying group
of the ambient brace is finitely generated.  It fails, however, when
one assumes only finite generation as a skew brace.  Using the free
right-nilpotent skew braces constructed by Jespers, Letourmy, Properzi,
Trombetti and Van Antwerpen \cite{Free}, we prove that for every
$n\geq3$ a one-generated skew brace $B_n$ has an ideal $T_n$ of index
$2$ which is not finitely generated as a skew brace; see
Theorem~\ref{thm:schreier-fails}.  By a lemma of Trombetti, $T_n$ is
still finitely generated as an \emph{ideal} of $B_n$.  This distinction
is essential.  In class $2$ the same parity ideal is instead generated
by four explicit elements.

For completeness, we also record a finite-presentability consequence of results of \cite{CPV} together with Trombetti's extension theorem \cite{Trombetti}. Namely, every finitely generated
$\theta_f$-skew brace is finitely presented and virtually trivial. This is included as an application of the cited results, rather than as an independent finite-presentation theorem.

\section{Index comparison}
Question~3.7 of \cite{CPV} asks whether, for an
arbitrary sub-skew brace \(A\) of a skew brace \(B\), finiteness of one of
\(\lvert B:A\rvert_{+}\) and \(\lvert B:A\rvert_{\circ}\) is equivalent to
finiteness of the other. her. A positive answer was
already known when $B$ is locally almost polycyclic \cite{poly}, and hence when it is locally supersoluble \cite[Corollary 4.17]{supersoluble}. We first recall these group-theoretic results.

\begin{lemma}\label{lem:neumann-covering}
If a group \(G\) is the union of \(m\) cosets of subgroups
\(G_1,\dots,G_m\), then at least one \(G_i\) has finite index in \(G\).
Moreover, one may choose \(i\) such that \(\lvert G:G_i\rvert\leq m\).
\end{lemma}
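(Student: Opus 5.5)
This is B.H. Neumann's covering lemma, and I will prove it by induction on the number $k$ of distinct subgroups among $G_1,\dots,G_m$. Write
\[
G=\bigcup_{j=1}^m x_jG_j .
\]
The inductive claim is that some $G_i$ satisfies $|G:G_i|\le m$.

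\emph{Base case $k=1$.} All the $G_j$ equal a single subgroup $H$. Then $G$ is a union of at most $m$ left cosets of $H$, so $|G:H|\le m$ directly.

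\emph{Inductive step.} Fix one of the distinct subgroups, say $H=G_1$, and suppose $|G:H|>m$. The cosets of $H$ used in the covering number at most $m$, so some left coset $yH$ is not among them. Since distinct cosets of $H$ are disjoint, $yH$ must be covered by the cosets $x_jG_j$ with $G_j\neq H$. Now left-multiply each coset $x_iH$ appearing in the covering by $x_iy^{-1}$. This gives
\[
x_iH = x_iy^{-1}(yH)\subseteq \bigcup_{G_j\ne H} x_iy^{-1}x_jG_j .
\]
Substituting, we obtain a covering of $G$ by cosets of the remaining $k-1$ subgroups only. The number of cosets, however, may grow to as much as $m^2$.

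So the naive induction proves that some $G_i$ has finite index, which is the first assertion, but it does not give the bound $m$.

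The main obstacle is therefore the sharp bound $|G:G_i|\le m$. For this I would use Neumann's averaging argument, applied to a minimal (irredundant) subcovering.

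First, pass to an irredundant subcover. By the finiteness part, we may then discard the cosets of infinite-index subgroups: Neumann's lemma states that the cosets of finite-index subgroups alone already cover $G$. This itself follows from the induction above, applied to the union of those cosets, using that a finite union of cosets of infinite-index subgroups cannot cover a coset of a finite-index subgroup.

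Next, let $D$ be the intersection of the remaining finite-index $G_i$. It has finite index, and every remaining coset is a union of cosets of $D$. Put the uniform probability on the finite set $G/D$. Each $x_iG_i$ then has measure $1/|G:G_i|$, and together they cover, so
\[
1\le \sum_i \frac{1}{|G:G_i|}.
\]
Hence some $i$ has $1/|G:G_i|\ge 1/m$, that is, $|G:G_i|\le m$.

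The delicate step is justifying that the infinite-index cosets can be dropped. I would handle it via the induction on $k$ just described: remove the cosets of one infinite-index subgroup $H$ at a time, translating as in the inductive step, and note that a covering by cosets of finite-index subgroups that survives this translation still covers with the original finite-index cosets. Alternatively, since the paper only "recalls" this result, it can simply be cited from B.H. Neumann (1954).
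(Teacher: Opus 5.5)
The paper gives no proof of this lemma; it only cites B.~H.~Neumann. Your overall plan is the standard proof: finiteness by induction on the number of distinct subgroups, then discarding the infinite-index cosets, then counting measure on $G/D$. The first stage (run, as you effectively do, with the claim ``some $G_i$ has finite index'') and the averaging stage are correct.

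\textbf{The gap is the step you flag as delicate.} The method you say you would use for it fails. Translating away the cosets of an infinite-index $H$ replaces each $x_iH$ by cosets $x_iy^{-1}x_jG_j$. These include \emph{new} cosets of the finite-index subgroups, not the original $x_jG_j$. So after all infinite-index subgroups are eliminated, you have a covering by finite-index cosets that are in general not the original ones, and of order $m^2$ or more of them. That procedure never shows that the original finite-index cosets cover $G$. Applying your measure argument to the translated covering would give only a bound much weaker than $m$. Your other justification, ``a finite union of cosets of infinite-index subgroups cannot cover a coset of a finite-index subgroup,'' is the right idea. However, you assert it rather than derive it, and you never explain why the uncovered part of $G$ contains a whole coset of a finite-index subgroup.

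\textbf{The fix} is to introduce $D$ \emph{before} the dropping step. Put $D=\bigcap\{G_i:|G:G_i|<\infty\}$. This family is nonempty by the finiteness part, so $D$ has finite index. The union $U$ of the finite-index cosets $x_iG_i$ is a union of left cosets of $D$, and hence so is $G\setminus U$. If $U\neq G$, some $gD$ is covered by the infinite-index cosets alone. Then
\[
D=\bigcup_j d_j(D\cap G_j),
\]
where $j$ runs over the infinite-index indices with $D\cap g^{-1}x_jG_j\neq\emptyset$, and $d_j$ is any element of that intersection. Since $|G:D|<\infty$ and $|G:G_j|=\infty$, each $|D:D\cap G_j|$ is infinite. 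This contradicts the finiteness part applied to the group $D$. Hence the original finite-index cosets, of which there are at most $m$, already cover $G$. Your measure argument then gives $1\le\sum 1/|G:G_i|$ over them, so some $|G:G_i|\le m$. Passing to an irredundant subcover is unnecessary.
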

\begin{lemma}\label{lem:coset-packing}
Let \(G\) be a group, let \(n\) be a positive integer, and let
\(\{g_iH_i:i\in I\}\) be a family of pairwise disjoint left cosets in \(G\).
If \(\lvert G:H_i\rvert=n\) for every \(i\in I\), then \(\lvert I\rvert\leq n\).
\end{lemma}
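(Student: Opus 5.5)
The statement is the coset-packing lemma, a purely group-theoretic fact. I plan to compare the number of cosets with the index of the intersection of the finitely many subgroups involved. The key observation is that a family of pairwise disjoint cosets is in particular a family of distinct cosets. Since \(\lvert G:H_i\rvert=n\) is finite, each \(H_i\) has only finitely many possibilities once we bound things suitably. The cleanest route is to pass to a finite family and argue by counting.

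\textbf{Reduction to finite families.} It suffices to show that every finite subfamily \(\{g_1H_1,\dots,g_kH_k\}\) of pairwise disjoint cosets satisfies \(k\le n\). I will argue by contradiction, assuming \(k=n+1\), and discard any extra cosets.

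\textbf{Main step: a measure argument.} Put \(D=H_1\cap\cdots\cap H_k\). By Poincaré's theorem, \(D\) has finite index, at most \(n^k\). Let \(N=\lvert G:D\rvert\).
\begin{itemize}
\item Each coset \(g_iH_i\) is a union of exactly \(\lvert H_i:D\rvert=N/n\) left cosets of \(D\).
\item Disjoint cosets of the \(H_i\) decompose into disjoint collections of \(D\)-cosets.
\item Counting \(D\)-cosets therefore gives \(k\cdot N/n\le N\), hence \(k\le n\).
\end{itemize}
This contradicts \(k=n+1\), so every finite subfamily has at most \(n\) members, and therefore \(\lvert I\rvert\le n\).

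The only point needing care is the identity \(\lvert H_i:D\rvert=\lvert G:D\rvert/\lvert G:H_i\rvert\). It follows from multiplicativity of indices for finite indices, via \(D\le H_i\le G\).

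I expect no real obstacle. The main subtlety is making sure \(D\) has finite index, and this is why the reduction to a finite subfamily comes first: an intersection of infinitely many subgroups of index \(n\) need not have finite index. An alternative route, if preferred, is the invariant mean on finite-index cosets. Assigning measure \(1/n\) to each \(g_iH_i\), via the finitely additive "density" on the Boolean algebra generated by cosets of \(D\), gives the same count. The \(D\)-coset counting above is the elementary version of that argument.
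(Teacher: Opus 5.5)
Your argument is correct and complete. Note that the paper gives no proof of this lemma. It is stated next to Neumann's covering lemma as a standard group-theoretic fact, with no proof and no citation of its own, so there is no argument in the paper to compare yours with.

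Your proof is the standard counting argument.
\begin{itemize}
\item First pass to $k$ of the cosets. The intersection $D=H_1\cap\cdots\cap H_k$ then has finite index $N\le n^k$.
\item Multiplicativity of indices gives $\lvert H_i:D\rvert=N/n$, so each $g_iH_i$ is the disjoint union of the $N/n$ left cosets $g_ihD$ with $hD\in H_i/D$.
\item Pairwise disjointness forces these collections of $D$-cosets to be disjoint. Hence $kN/n\le N$, that is, $k\le n$.
\end{itemize}

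The one step that genuinely matters is the reduction to a finite subfamily, and you handle it correctly. An intersection over an infinite index set could have infinite index. The reduction is also exactly what the paper needs in Theorem~\ref{thm:index-comparison}. There the transversal $T$ is explicitly not assumed finite, and the lemma is used to rule out infinitely many pairwise disjoint cosets $t+\lambda_t(A)$.

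One small correction: drop the sentence claiming that each $H_i$ has ``only finitely many possibilities.'' Read literally it is false, since an infinite elementary abelian $2$-group has infinitely many subgroups of index $2$. Your proof never uses it, so deleting it costs nothing.
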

Lemma~\ref{lem:neumann-covering} is Neumann's lemma on finite coset 
coverings \cite{Neumann}.

\begin{theorem}\label{thm:index-comparison}
Let \(A\) be a sub-skew brace of a skew brace \(B\). Then
\[
  \lvert B:A\rvert_{+}<\infty
  \quad\Longleftrightarrow\quad
  \lvert B:A\rvert_{\circ}<\infty.
\]
Whenever these conditions hold,
\[
  \lvert B:A\rvert_{+}=\lvert B:A\rvert_{\circ}.
\]
Consequently, Question~3.7 of \cite{CPV} has an
affirmative answer.
\end{theorem}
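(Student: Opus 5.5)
The argument rests on one identity relating multiplicative cosets of \(A\) to additive cosets. For \(a\in B\) and \(x\in A\) we have \(a\circ x=a+\lambda_a(x)\). Hence
\[
  a\circ A \;=\; a+\lambda_a(A),
\]
so every left coset of \((A,\circ)\) in \((B,\circ)\) is a left coset of the subgroup \(\lambda_a(A)\) of \((B,+)\). Since \(\lambda_a\in\Aut(B,+)\) maps \(A\) onto \(\lambda_a(A)\), we get \(\lvert B:\lambda_a(A)\rvert_+=\lvert B:A\rvert_+\) for every \(a\). The plan is to apply Lemma~\ref{lem:coset-packing} in one direction and Lemma~\ref{lem:neumann-covering} in the other, and then use a common refinement to get equality of the indices.

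\emph{From additive to multiplicative.} Suppose \(n=\lvert B:A\rvert_+<\infty\). The distinct multiplicative cosets \(a\circ A\) are pairwise disjoint, and each is an additive coset \(a+\lambda_a(A)\) of a subgroup of additive index exactly \(n\). Lemma~\ref{lem:coset-packing} then says there are at most \(n\) of them, so \(\lvert B:A\rvert_\circ\le n\).

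\emph{From multiplicative to additive.} Suppose \(m=\lvert B:A\rvert_\circ<\infty\), and write \(B=\bigcup_{i=1}^m a_i\circ A=\bigcup_{i=1}^m\bigl(a_i+\lambda_{a_i}(A)\bigr)\). By Lemma~\ref{lem:neumann-covering}, some \(\lambda_{a_i}(A)\) has finite index in \((B,+)\). Because \(\lambda_{a_i}\) is an additive automorphism, \(A\) itself then has finite additive index.

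\emph{Equality.} Assume both indices are finite, with \(n=\lvert B:A\rvert_+\) and \(m=\lvert B:A\rvert_\circ\). The first direction already gives \(m\le n\); alternatively one could simply cite \cite[Theorem~3.3]{CPV}. For a self-contained proof, let
\[
  N=\bigcap_{i=1}^m\lambda_{a_i}(A).
\]
This is a finite intersection of subgroups of finite index in \((B,+)\), so \(k=\lvert B:N\rvert_+\) is finite. Each subgroup \(\lambda_{a_i}(A)\) contains \(N\) with index \(k/n\), so each coset \(a_i+\lambda_{a_i}(A)\) is a disjoint union of exactly \(k/n\) left cosets of \(N\). These \(m\) sets partition \(B\), so counting the left cosets of \(N\) in \(B\) gives \(m\cdot k/n=k\), hence \(m=n\).

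I expect no serious obstacle once the identity \(a\circ A=a+\lambda_a(A)\) is in place. The only delicate point is the counting step: it needs the multiplicative cosets to be genuinely disjoint (they are, as cosets of a subgroup of \((B,\circ)\)), and it needs \(N\) to have finite index, which holds because it is a finite intersection of finite-index subgroups.
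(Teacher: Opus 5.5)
Your proof is correct and follows the paper's argument: the identity \(a\circ A=a+\lambda_a(A)\), with Lemma~\ref{lem:coset-packing} for the additive-to-multiplicative direction and Lemma~\ref{lem:neumann-covering} for the converse. The only difference is the equality step: the paper cites \cite[Theorem~3.3]{CPV}, whereas your refinement by \(N=\bigcap_{i}\lambda_{a_i}(A)\) proves \(m=n\) directly, which makes the argument self-contained apart from Neumann's lemma.
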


\begin{proof}
Assume first that \(\lvert B:A\rvert_{\circ}<\infty\), and let \(T\) be a
finite left transversal for \(A\) in \((B,\circ)\). For each \(t\in T\), the
brace identity gives
\[
  t\circ A=t+\lambda_t(A).
\]
Thus the additive group is covered by finitely many cosets of additive
subgroups:
\[
  (B,+)=\bigcup_{t\in T}\bigl(t+\lambda_t(A)\bigr).
\]
By \ref{lem:neumann-covering}, at least one subgroup \(\lambda_t(A)\) has
finite index in \((B,+)\). Since \(\lambda_t\) is an additive automorphism,
\(A\) itself has finite additive index.

Conversely, put \(n\coloneqq\lvert B:A\rvert_{+}<\infty\), and let \(T\) be an
arbitrary left transversal for \(A\) in \((B,\circ)\); at this point \(T\) is
not assumed finite. The multiplicative cosets \(t\circ A\), for \(t\in T\),
are pairwise disjoint. Viewed inside the additive group, they are
\[
  t\circ A=t+\lambda_t(A).
\]
Every \(\lambda_t(A)\) is an additive subgroup of index \(n\), because
\(\lambda_t\in\Aut(B,+)\). Applying Lemma \ref{lem:coset-packing} to \((B,+)\)
gives \(\lvert T\rvert\leq n\). Hence
\(\lvert B:A\rvert_{\circ}<\infty\).

In either direction both indices are finite, and their equality follows from
\cite[Theorem~3.3]{CPV}.
\end{proof}

\section{The finite-index ideal-core problem}\label{sec:ideal-core}

The maximal ideal contained in a sub-skew brace has finite index under the
central-nilpotency hypothesis $B/\zeta_2(B)$ finite
\cite[Theorem~4.26]{BBEFPT}.  Di Matteo, Esteban--Romero, Ferrara and
P\'erez--Calabuig posed the general core problem in
\cite[Question~3.9]{DMEFP} and obtained positive results under
finite-generation and finite-index hypotheses on the upper socle
series or the hypercentre
\cite[Proposition~3.10 and Corollary~3.11]{DMEFP}.  We now prove that
some additional hypothesis is necessary.

The construction of Theorem~\ref{thm:no-ideal-core} has two
ingredients.  First we produce a left brace $F$ of order $16$ with a
multiplicative epimorphism $\chi:(F,\circ)\to C_2$ whose kernel
contains only a very small left ideal.  We then take a restricted direct
sum of copies of $F$ and form a semidirect extension by the trivial
brace $C_3$.

\subsection{A left brace of order sixteen}

Let $V=\F_2^4$.  We write
\[
  x=(a,b,c,d), \qquad y=(u,v,w,z),
\]
and define
\begin{align*}
  p(x)&=b+c+d+cd,\\
  q(x)&=c+bd,\\
  r(x)&=b+c+cd.
\end{align*}
All coordinates and polynomial expressions in this section are over
$\F_2$.  For $x\in V$, define an additive automorphism $\lambda_x$ by
\begin{equation}\label{eq:lambda-matrix}
 \lambda_x=
 \begin{pmatrix}
  1&0&0&p(x)\\
  0&1&d&q(x)\\
  0&0&1&r(x)\\
  0&0&0&1
 \end{pmatrix}.
\end{equation}
Equivalently,
\begin{equation}\label{eq:lambda-coordinates}
 \lambda_x(y)=
 \bigl(
 u+z(b+c+d+cd),\,
 v+dw+z(c+bd),\,
 w+z(b+c+cd),\,
 z
 \bigr).
\end{equation}

\begin{lemma}\label{lem:finite-brace}
The operation
\[
  x\circ y=x+\lambda_x(y)
\]
makes $F=(V,+,\circ)$ into a left brace.
\end{lemma}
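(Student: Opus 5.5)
The plan is to reduce the brace axioms to a single cocycle-type identity for \(\lambda\), and then check that identity coordinatewise on the matrices~\eqref{eq:lambda-matrix}.

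\textbf{Reduction.} First, each \(\lambda_x\) is an invertible \(\F_2\)-linear map, since it is unipotent upper triangular. Hence \(x\circ(y+y')=x+\lambda_x(y)+\lambda_x(y')=x\circ y-x+x\circ y'\), so the brace identity holds automatically. Next, \(p(0)=q(0)=r(0)=0\), so \(\lambda_0=\mathrm{id}\). It follows that \(0\circ y=y\) and \(x\circ 0=x\), so \(0\) is a two-sided identity. For associativity we compare
\[
(x\circ y)\circ y'=x\circ y+\lambda_{x\circ y}(y')
\quad\text{and}\quad
x\circ(y\circ y')=x\circ y+\lambda_x\lambda_y(y').
\]
So associativity is equivalent to
\[
\lambda_{x\circ y}=\lambda_x\lambda_y \qquad\text{for all } x,y\in V.
\]
Finally, \(x\circ y=x\circ y'\) forces \(\lambda_x(y)=\lambda_x(y')\), hence \(y=y'\). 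A finite left-cancellative monoid is a group, so \((V,\circ)\) is a group and \(F\) is a left brace.

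\textbf{The matrix identity.} Write a matrix of the shape~\eqref{eq:lambda-matrix} as determined by its entries in positions \((1,4),(2,3),(2,4),(3,4)\), say \((P,D,Q,R)\). Multiplying two such matrices gives
\[
(P,D,Q,R)\cdot(P',D',Q',R')=(P+P',\,D+D',\,Q+Q'+DR',\,R+R').
\]
By~\eqref{eq:lambda-coordinates},
\[
x\circ y=\bigl(a+u+zp(x),\; b+v+dw+zq(x),\; c+w+zr(x),\; d+z\bigr).
\]
The \((2,3)\)-entry matches at once, because the fourth coordinate of \(x\circ y\) is \(d+z\). It remains to verify three polynomial identities over \(\F_2\):
\[
p(x\circ y)=p(x)+p(y),\qquad r(x\circ y)=r(x)+r(y),\qquad q(x\circ y)=q(x)+q(y)+d\,r(y).
\]
Note that \(p,q,r\) do not involve the first coordinate, so the first coordinate of \(x\circ y\) plays no role.

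\textbf{Case analysis.} I will split according to \(z\in\{0,1\}\).

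For \(z=0\), the product is \(x\circ y=(\ast,\,b+v+dw,\,c+w,\,d)\), and the identities are short expansions. For example,
\[
r(x\circ y)=(b+v+dw)+(c+w)+(c+w)d=r(x)+v+w=r(x)+r(y),
\]
using \(r(y)=v+w\) when \(z=0\). The identities for \(p\) and \(q\) are handled in the same way.

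For \(z=1\), the product is \(x\circ y=(\ast,\,b+v+dw+q(x),\,c+w+r(x),\,d+1)\). Here I substitute the definitions of \(q(x)\) and \(r(x)\), expand, and reduce using \(t^2=t\) in \(\F_2\). A further split on \(d\in\{0,1\}\) keeps each expansion to a few monomials.

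The main obstacle is the \(z=1\) case of the \(q\)-identity. There the correction term \(d\,r(y)\) has to be produced exactly by the cross terms \(d\cdot w\) and \((b+v+dw+q(x))(d+1)\), so the bookkeeping there is where an error is most likely. As a fallback, the identity can be verified by a direct finite enumeration over all \(2^6\) values of \((b,c,d,v,w,z)\).
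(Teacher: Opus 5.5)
Your proposal is correct and follows the paper's proof. Both arguments reduce to $\lambda_{x\circ y}=\lambda_x\lambda_y$, using the product rule for the matrices $M(p,d,q,r)$ and the three polynomial identities for $p,q,r$; these identities do hold over $\F_2$. The only cosmetic difference is at the last step: you check the group axioms by hand (identity $0$, associativity, and left cancellation in a finite monoid), whereas the paper concludes via the regular-subgroup-of-the-holomorph correspondence.
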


\begin{proof}
The matrices in \eqref{eq:lambda-matrix} are invertible.  If
$M(p,d,q,r)$ denotes the displayed matrix, then
\begin{equation}\label{eq:matrix-product}
 M(p,d,q,r)M(p',d',q',r')
 =M(p+p',d+d',q+q'+dr',r+r').
\end{equation}
Let $t=x+\lambda_x(y)$.  Substitution in the definitions of $p,q,r$
gives
\begin{align*}
 p(t)&=p(x)+p(y),& d(t)&=d+z,\\
 r(t)&=r(x)+r(y),&
 q(t)&=q(x)+q(y)+d\,r(y).
\end{align*}
Together with \eqref{eq:matrix-product}, these identities yield
\[
  \lambda_{x+\lambda_x(y)}=\lambda_x\lambda_y.
\]
Consequently the maps $x\mapsto (x,\lambda_x)$ form a regular subgroup
of the holomorph of $(V,+)$, and the stated operation defines a left
brace.
\end{proof}

Define
\begin{equation}\label{eq:chi}
 \chi:(F,\circ)\longrightarrow C_2,
 \qquad \chi(x)=p(x)=b+c+d+cd.
\end{equation}
The first identity in the proof of Lemma~\ref{lem:finite-brace} shows
that $\chi$ is a group homomorphism.  It is surjective, since
$\chi(0,1,0,0)=1$.

For the elementary calculations below, identify $(a,b,c,d)$ with the
integer $a+2b+4c+8d$ in $\{0,\ldots,15\}$.  This notation is used only
as a label; addition remains vector addition over $\F_2$.

\begin{lemma}\label{lem:kernel-left-ideals}
Let
\[
  J=\langle(1,0,0,0)\rangle_+=\{0,1\}.
\]
Then:
\begin{enumerate}[label=\textup{(\roman*)}]
 \item $J$ is an ideal of $F$;
 \item every left ideal of $F$ contained in $\ker\chi$ is contained in
       $J$;
 \item the multiplicative group of $F/J$ is nonabelian.
\end{enumerate}
\end{lemma}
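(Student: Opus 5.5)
The plan is to verify the three items directly from the coordinate formula \eqref{eq:lambda-coordinates}. The main tool throughout is the identity
\[
  \lambda_x(y)-y=\bigl(z\,p(x),\; dw+z\,q(x),\; z\,r(x),\; 0\bigr),
\]
valid for $x=(a,b,c,d)$ and $y=(u,v,w,z)$.

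\emph{(i)} Write $e_1=(1,0,0,0)$. Since $e_1$ has $v=w=z=0$, the identity above gives $\lambda_x(e_1)=e_1$ for all $x$. So $J$ is a $\lambda$-invariant additive subgroup, and it is normal in the abelian group $(F,+)$. For multiplicative normality, first note that $e_1$ has $b=c=d=0$, so $p(e_1)=q(e_1)=r(e_1)=0$. Hence $\lambda_{e_1}=\mathrm{id}$, and therefore
\[
  e_1\circ x=e_1+x=x+\lambda_x(e_1)=x\circ e_1 .
\]
Thus $J$ is central in $(F,\circ)$, hence normal, and $J$ is an ideal.

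\emph{(ii)} Let $L\subseteq\ker\chi$ be a left ideal, and suppose $y=(u,v,w,z)\in L\setminus J$, so $(v,w,z)\neq 0$. Since $L$ is $\lambda$-invariant and additively closed, it contains $\lambda_x(y)-y$ for every $x$. I will split into three cases and in each exhibit an element of $L$ with $\chi=1$, which is a contradiction.
\begin{itemize}
 \item If $z=1$, take $x=(0,1,0,0)$. Then $p=r=1$, $q=0$ and $d=0$, so $\lambda_x(y)-y=(1,0,1,0)$. Here $\chi(1,0,1,0)=c=1$.
 \item If $z=0$ and $w=1$, take $x=(0,0,0,1)$. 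Then $\lambda_x(y)-y=(0,1,0,0)$, and $\chi(0,1,0,0)=1$.
 \item If $z=w=0$, then $v=1$, so $y=(u,1,0,0)$ itself has $\chi(y)=b=1$.
\end{itemize}
Hence $L\subseteq J$.

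\emph{(iii)} Since $J$ is an ideal, $F/J$ is a left brace. Because $J$ is an additive subgroup, the multiplicative group of $F/J$ is abelian exactly when $x\circ y-y\circ x\in J$ for all $x,y$. As $x\circ y-y\circ x=(\lambda_x(y)-y)-(\lambda_y(x)-x)$, it suffices to find one pair with this difference outside $J$.

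Take $x=(0,0,0,1)$ and $y=(0,0,1,0)$. Then $\lambda_x(y)-y=(0,dw,0,0)=(0,1,0,0)$. Since $y$ has $c=1$, we get $p(y)=q(y)=r(y)=1$, and $x$ has $z=1$, so $\lambda_y(x)-x=(1,1,1,0)$. The difference is $(1,0,1,0)\notin J$.

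The only real obstacle is item (ii): one has to choose test elements $x$ that push an arbitrary nonzero $(v,w,z)$ onto an element where $\chi=1$. The case split on the last nonzero coordinate handles this, and everything else is a finite computation.
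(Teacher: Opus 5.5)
Your proof is correct and follows essentially the same route as the paper: a direct finite computation from the coordinate formula for $\lambda_x$. You show $J$ is central in $(F,\circ)$. You force left ideals in $\ker\chi$ into $J$ by exhibiting $\lambda$-images outside $\ker\chi$; you use the differences $\lambda_x(y)-y$ with a case split on $(v,w,z)$, where the paper lists $\ker\chi=\{0,1,6,7,10,11,14,15\}$ and computes $\lambda_8(6)=4$, etc. Finally, you exhibit a pair that does not commute modulo $J$; your pair $(0,0,0,1),(0,0,1,0)$ replaces the paper's $2\circ 8=15$, $8\circ 2=10$, and all your individual computations check out.
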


\begin{proof}
The matrices $\lambda_x$ fix $(1,0,0,0)$ for every $x$, while
$\lambda_j=\mathrm{id}$ for every $j\in J$.  Thus $J$ is
$\lambda$-invariant.  Moreover, for $x\in F$ and $j\in J$,
\[
  x\circ j\circ x^{-1}_\circ=\lambda_x(j)\in J.
\]
Hence $J$ is also normal in $(F,\circ)$, and is therefore an ideal.

From \eqref{eq:chi},
\[
 \ker\chi=\{0,1,6,7,10,11,14,15\}.
\]
Formula \eqref{eq:lambda-coordinates} gives
\begin{align*}
 \lambda_8(6)&=4,& \lambda_8(7)&=5,\\
 \lambda_4(10)&=13,& \lambda_4(11)&=12,\\
 \lambda_4(14)&=9,& \lambda_4(15)&=8.
\end{align*}
Every element on the right lies outside $\ker\chi$.  Therefore a
$\lambda$-invariant additive subgroup contained in $\ker\chi$ cannot
contain any of $6,7,10,11,14,15$, and so it is contained in $J$.

Finally, a direct calculation gives
\[
  2\circ 8=15, \qquad 8\circ 2=10.
\]
The additive difference $15-10=5$ does not belong to $J$.  Thus the two
products define distinct cosets modulo $J$, proving that $(F/J,\circ)$
is nonabelian.
\end{proof}

\begin{remark}\label{rem:core-fails}
The finite brace $F$ already displays the obstruction to the usual core
argument.  The set
\[
  L=\{0,2,4,6\}
\]
is a strong left ideal of $F$, but a direct calculation gives
\[
  \Core_{(F,\circ)}(L)=\{0,6\}.
\]
This multiplicative core is not a left ideal, since
$\lambda_8(6)=4$.  Thus taking the multiplicative core of a strong left
ideal need not preserve $\lambda$-invariance.
\end{remark}

\subsection{An infinite obstruction}

Let
\[
  C=\bigoplus_{n\geq 1}F_n,
  \qquad F_n\cong F,
\]
be the restricted direct sum, with both brace operations defined
coordinatewise.  For $c=(c_n)\in C$, set
\begin{equation}\label{eq:chi-infinity}
  \chi_\infty(c)=\sum_{n\geq 1}\chi(c_n)\in C_2.
\end{equation}
The sum is finite, and $\chi_\infty:(C,\circ)\to C_2$ is a group
epimorphism.

\begin{lemma}\label{lem:no-finite-index-kernel}
No finite-index ideal of $C$ is contained in $\ker\chi_\infty$.
\end{lemma}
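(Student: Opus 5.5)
Suppose, for a contradiction, that $I$ is an ideal of $C$ with $|C:I|<\infty$ and $I\subseteq\ker\chi_\infty$. The plan is to localise $I$ to a single coordinate: we find an index $n$ such that the $n$-th copy $F_n$ is almost entirely contained in $I$, and then obtain a contradiction with Lemma~\ref{lem:kernel-left-ideals}(ii) and (iii).

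First, consider the finitely many (additive) cosets of $I$. For each $n$, let $\pi_n\colon C\to F_n$ be the coordinate projection and $\iota_n\colon F_n\to C$ the coordinate embedding. Both are brace homomorphisms, and $\iota_n(F_n)$ is an ideal of $C$. So $I_n\coloneqq\iota_n^{-1}(I)=I\cap F_n$ is an ideal of $F_n$. Its additive index satisfies $|F_n:I_n|\le|C:I|$. Since $\chi_\infty\circ\iota_n=\chi$, we get $I_n\subseteq\ker\chi$. By Lemma~\ref{lem:kernel-left-ideals}(ii), $I_n\subseteq J$ for every $n$. Hence $I\cap F_n$ has index at least $8$ in $F_n$ for each $n$. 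This alone is not a contradiction, because a finite-index subgroup may meet each coordinate in a proper subgroup. We need to exploit that $I$ is an ideal, and in particular that it is normal in $(C,\circ)$ and $\lambda$-invariant.

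The key step is the following. Since $C/I$ is a finite brace, $I$ contains the commutator-type elements produced by large families of coordinates. Concretely, $(C,\circ)/I$ is finite, of order $m$ say. Among the infinitely many coordinates, consider the elements $\iota_n(2)$ and $\iota_n(8)$. By pigeonhole there are $n\ne n'$ with $\iota_n(2)\equiv\iota_{n'}(2)$ and $\iota_n(8)\equiv\iota_{n'}(8)$ modulo $I$ in $(C,\circ)$; we pigeonhole on pairs of cosets. Then $g\coloneqq\iota_n(2)\circ\overline{\iota_{n'}(2)}\in I$ and $h\coloneqq\iota_n(8)\circ\overline{\iota_{n'}(8)}\in I$. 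The multiplicative commutator $[g,\iota_n(8)]_\circ$ lies in $I$ because $I$ is normal in $(C,\circ)$. Since distinct coordinates commute, this commutator equals $\iota_n([2,8]_\circ)$. Thus $\iota_n([2,8]_\circ)\in I\cap F_n=I_n\subseteq J$. In other words, $2\circ8$ and $8\circ2$ would coincide modulo $J$, contradicting Lemma~\ref{lem:kernel-left-ideals}(iii), which showed $15-10=5\notin J$. More precisely, (iii) shows that $(F/J,\circ)$ is nonabelian, and the computation there shows that the images of $2$ and $8$ fail to commute. For this step we should check that $[2,8]_\circ\in J$ is equivalent to $2\circ8\equiv8\circ2$ modulo the ideal $J$. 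That holds because $J$ is an ideal, so multiplicative and additive cosets of $J$ agree.

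The main obstacle I anticipate is not the pigeonhole argument but making sure the whole argument uses only that $I$ is a normal multiplicative subgroup of finite index. If so, then $\lambda$-invariance enters only through Lemma~\ref{lem:kernel-left-ideals}(ii), applied to $I_n$. Indeed, $I_n$ is a left ideal of $F_n$, being the intersection of the ideal $I$ with the sub-skew brace $\iota_n(F_n)$, on which $\lambda$ acts coordinatewise. We also need the finite multiplicative index of $I$. This follows from the finite additive index via Theorem~\ref{thm:index-comparison}, or directly, since $I$ is an ideal.
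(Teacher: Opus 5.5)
Your proof is correct and follows essentially the same route as the paper. Both arguments combine $I\cap F_n\subseteq J$ (from Lemma~\ref{lem:kernel-left-ideals}(ii)), the noncommutativity of $(F/J,\circ)$ and the elementwise commutation of distinct coordinates with a pigeonhole argument in the finite group $(C/I,\circ)$; the only difference is that the paper pigeonholes on the images $H_n$ of the $F_n$, while you pigeonhole on the cosets of the single elements $\iota_n(2)$, and your element $h$ and the pigeonhole on pairs are superfluous, since $\iota_n(2)\equiv\iota_{n'}(2)$ alone already yields $\iota_n([2,8]_\circ)\in I$.
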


\begin{proof}
Suppose that $K$ is an ideal of $C$, that $K\subseteq\ker\chi_\infty$,
and that $D=C/K$ is finite.  For every $n$, the coordinate brace $F_n$
is an ideal of $C$, and hence
\[
  K_n=K\cap F_n
\]
is an ideal of $F_n$.  An element supported in the $n$th coordinate
belongs to $\ker\chi_\infty$ precisely when its nonzero coordinate
belongs to $\ker\chi$.  Lemma~\ref{lem:kernel-left-ideals} therefore
implies
\[
  K_n\subseteq J_n,
\]
where $J_n\cong J$ is the corresponding ideal of $F_n$.

Let $H_n$ be the image of $(F_n,\circ)$ in the finite group
$(D,\circ)$.  Then
\[
  H_n\cong (F_n/K_n,\circ).
\]
Since $K_n\subseteq J_n$ and $(F_n/J_n,\circ)$ is non-abelian, every
$H_n$ is non-abelian.  If $n\neq m$, then $F_n$ and $F_m$ commute
elementwise in $(C,\circ)$, so $H_n$ and $H_m$ commute elementwise in
$(D,\circ)$.

The subgroups $H_n$ are pairwise distinct.  Indeed, if $H_n=H_m$ for
some $n\neq m$, then any two elements of this common subgroup may be
lifted respectively to $F_n$ and $F_m$, and therefore commute.  This
would make $H_n$ abelian, a contradiction.  The finite group
$(D,\circ)$ would thus contain infinitely many distinct subgroups
$H_n$, which is impossible.
\end{proof}

\subsection{The semidirect extension}

Let $T=C_3$ with its trivial brace structure.  Define an action of
$(C,\circ)$ on $(T,+)$ by
\[
  \varepsilon_c(q)=(-1)^{\chi_\infty(c)}q.
\]
Since $\chi_\infty$ is a multiplicative homomorphism, so is
$c\mapsto\varepsilon_c$.  On the set
\[
  B=T\times C
\]
define
\begin{align}
 (q,c)+(s,e)&=(q+s,c+e),\label{eq:addition-B}\\
 (q,c)\circ(s,e)&=(q+\varepsilon_c(s),c\circ e).
 \label{eq:multiplication-B}
\end{align}

\begin{lemma}\label{lem:B-brace}
Equations \eqref{eq:addition-B} and \eqref{eq:multiplication-B} define
a left brace structure on $B$, with
\begin{equation}\label{eq:lambda-B}
  \lambda_{(q,c)}(s,e)
  =\bigl(\varepsilon_c(s),\lambda_c(e)\bigr).
\end{equation}
\end{lemma}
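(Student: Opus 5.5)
We show that the multiplication \eqref{eq:multiplication-B} makes $(B,\circ)$ a group, that the brace compatibility identity holds, and then read off $\lambda$. The cleanest route is the regular-subgroup criterion already used in Lemma~\ref{lem:finite-brace}. We show that the maps
\[
  \Lambda_{(q,c)}(s,e)=\bigl(\varepsilon_c(s),\lambda_c(e)\bigr)
\]
are automorphisms of the abelian group $(B,+)=(T,+)\times(C,+)$. We then show that they satisfy
\[
  \Lambda_{(q,c)+\Lambda_{(q,c)}(s,e)}=\Lambda_{(q,c)}\Lambda_{(s,e)},
\]
and that $(q,c)\circ(s,e)=(q,c)+\Lambda_{(q,c)}(s,e)$. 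These facts give the left brace structure, with $\lambda=\Lambda$, which is \eqref{eq:lambda-B}.

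First, $\Lambda_{(q,c)}$ is additive and bijective. Its first component $s\mapsto\pm s$ is an automorphism of $C_3$. Its second component $\lambda_c$ is an automorphism of $(C,+)$, because $C$ is a restricted direct sum of copies of the brace $F$ from Lemma~\ref{lem:finite-brace} with coordinatewise operations, so $\lambda_c$ acts as $\lambda_{c_n}$ in each coordinate. The identity $(q,c)+\Lambda_{(q,c)}(s,e)=(q+\varepsilon_c(s),\,c+\lambda_c(e))=(q+\varepsilon_c(s),\,c\circ e)$ is immediate from the definitions, since $c\circ e=c+\lambda_c(e)$ in $C$.

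For the cocycle identity, the $C$-component is $\lambda_{c+\lambda_c(e)}=\lambda_{c\circ e}=\lambda_c\lambda_e$, which holds because $C$ is a left brace. The $T$-component requires $\varepsilon_{c\circ e}=\varepsilon_c\varepsilon_e$. This follows from $\chi_\infty$ being a homomorphism $(C,\circ)\to C_2$, which in turn comes from Lemma~\ref{lem:finite-brace}'s identity $p(x\circ y)=p(x)+p(y)$ applied coordinatewise. Note that $\varepsilon$ depends only on the $C$-coordinate and not on $q$, so no interaction from $T$ arises.

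There is an alternative direct check. Associativity of $\circ$ is the semidirect product $C_3\rtimes_\varepsilon(C,\circ)$. The identity element is $(0,0)$, and the inverse of $(q,c)$ is $(-\varepsilon_{\bar c}(q),\bar c)$. For the brace axiom, the $T$-part reduces to linearity of $\varepsilon_c$, and the $C$-part is the brace axiom in $C$. I expect no real obstacle here; the only point needing care is that $\chi_\infty$ is multiplicative, and that has already been asserted.
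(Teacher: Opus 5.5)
Your proof is correct and follows essentially the paper's route. The paper likewise identifies $(B,\circ)$ with $C_3\rtimes_\varepsilon(C,\circ)$. It then uses that $(q,c)\mapsto(\varepsilon_c,\lambda_c)$ is a homomorphism into $\Aut(C_3\times(C,+))$, because $\chi_\infty$ and $c\mapsto\lambda_c$ are multiplicative; your cocycle (regular-subgroup) formulation makes these same verifications explicit.
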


\begin{proof}
The multiplicative group is the semidirect product
$C_3\rtimes_\varepsilon(C,\circ)$.  Formula \eqref{eq:lambda-B}
follows immediately from the definitions.  The assignment
\[
  (q,c)\longmapsto(\varepsilon_c,\lambda_c)
\]
is a homomorphism from $(B,\circ)$ to
$\Aut(C_3\times(C,+))$, because both $c\mapsto\varepsilon_c$ and
$c\mapsto\lambda_c$ are multiplicative homomorphisms.  Hence the skew
brace identity holds.  Since the additive group is abelian, $B$ is a
left brace.
\end{proof}

\begin{proof}[Proof of Theorem~\ref{thm:no-ideal-core}]
Set
\[
  A=\{0\}\times C.
\]
It is an additive subgroup of index $3$.  Formula
\eqref{eq:lambda-B} shows that it is invariant under every lambda map.
As $(B,+)$ is abelian, $A$ is a strong left ideal.  In particular its
additive and multiplicative indices coincide, and $|B:A|=3$.

Suppose that $I$ is an ideal of $B$ contained in $A$.  There is an
additive subgroup $K$ of $C$ such that
\[
  I=\{0\}\times K.
\]
Restricting additive conjugation, multiplicative conjugation and the
lambda action to $\{0\}\times C$ shows that $K$ is an ideal of $C$.

We claim that $K\subseteq\ker\chi_\infty$.  For $q\in C_3$ and
$k\in K$, multiplicative normality of $I$ gives
\begin{align*}
 &(q,0)\circ(0,k)\circ(q,0)^{-1}_\circ\\
 &\hspace{2cm}=
 \bigl(q-\varepsilon_k(q),k\bigr)\in I.
\end{align*}
Here $(q,0)^{-1}_\circ=(-q,0)$.
If $\chi_\infty(k)=1$, then $\varepsilon_k(q)=-q$, and choosing
$q\neq0$ gives $q-\varepsilon_k(q)=2q\neq0$ in $C_3$.  This contradicts
$I\subseteq A$, proving the claim.

If $I$ had finite index in $B$, then $K$ would have finite index in
$C$.  This is impossible by Lemma~\ref{lem:no-finite-index-kernel}.
\end{proof}

\begin{corollary}
Question~3.6 of \cite{CPV} and Question~3.9 of \cite{DMEFP} have
negative answers, even within the class of left braces.  Thus the
finite-index core conclusion of \cite[Theorem~4.26]{BBEFPT} does not
extend to arbitrary left braces.
\end{corollary}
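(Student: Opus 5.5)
The corollary follows by combining Theorem~\ref{thm:no-ideal-core} with the precise formulations of the cited questions; no new computation is needed. I would check that each question is an instance of the statement refuted by the theorem.

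\cite[Question~3.6]{CPV} asks whether every sub-skew brace of finite index contains an ideal of finite index. The pair $(B,A)$ from Theorem~\ref{thm:no-ideal-core} is a counterexample. The left brace $B=C_3\times C$, built in Lemma~\ref{lem:B-brace}, has additive group $C_3\times(C,+)$, which is abelian. The subset $A=\{0\}\times C$ is a strong left ideal, and hence a sub-skew brace, with $|B:A|=3$. By Theorem~\ref{thm:no-ideal-core}, every ideal $I\subseteq A$ has infinite index. This answers the question negatively. Because $B$ is a left brace and $A$ is even a strong left ideal, the negative answer holds within the class of left braces. It also holds under the stronger hypothesis on $A$, which is sharper than the positive result \cite[Proposition~3.2]{CPV}.

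For \cite[Question~3.9]{DMEFP}, the core problem asks whether the largest ideal contained in a finite-index sub-skew brace has finite index. I would first note that this largest ideal exists. The sum of ideals contained in $A$ is again an ideal contained in $A$, since ideals are normal additive subgroups invariant under $\lambda$ and normal in $(B,\circ)$, and the additive sum of ideals is an ideal. Every ideal contained in $A$ has infinite index, so this core does too. The same example therefore answers Question~3.9 negatively.

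Finally, for \cite[Theorem~4.26]{BBEFPT}, which assumes $B/\zeta_2(B)$ is finite, the example shows that this kind of hypothesis cannot be dropped for left braces. Here the only care needed is to state that $B$ lies outside the theorem's hypothesis, and this is automatic: if $B$ satisfied it, the theorem would give a finite-index core, contradicting Theorem~\ref{thm:no-ideal-core}. The main point needing attention is matching the formulations of the questions. In particular, Question~3.6 asks for \emph{some} finite-index ideal inside $A$, while Question~3.9 asks about the \emph{maximal} one. These are equivalent once the maximal ideal is known to exist, since any finite-index ideal in $A$ is contained in it and forces it to have finite index.
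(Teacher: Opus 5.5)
Your proposal is correct and matches the paper, which records this corollary with no separate proof, as an immediate consequence of Theorem~\ref{thm:no-ideal-core} applied to the index-$3$ strong left ideal $A=\{0\}\times C$ of the left brace $B=C_3\times C$. Your extra observation that the largest ideal contained in $A$ exists (sums of ideals are ideals) is exactly what makes Question~3.9 equivalent to Question~3.6, a point the paper takes for granted.
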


\section{Finite-index fixed points}\label{sec:fixed-points}
We turn to Question~5.21 of \cite{CPV}.  The main step is stronger
than the required finiteness of the right star orbit: a finite-index
left ideal is contained in the fixed-point subgroup of $\lambda_x$.

For an additive automorphism $\varphi$ of $(B,+)$, write
\[
  \Fix(\varphi)=\{b\in B:\varphi(b)=b\}.
\]

\begin{proposition}\label{prop:fixed-left-ideal}
Let $B$ be a skew brace and let $x\in B$.  Suppose that $[x]_+$,
$[x]_\circ$, and $[x]_\lambda$ are finite.  Then there is a left ideal
$M$ of $B$ such that
\[
  |B:M|_+<\infty
  \qquad\text{and}\qquad
  M\subseteq\Fix(\lambda_x).
\]
More precisely, if
\[
  a=|[x]_+|,\qquad c=|[x]_\circ|,
  \qquad \ell=|[x]_\lambda|,
\]
then $M$ may be chosen so that
\begin{equation}\label{eq:index-bound}
  |B:M|_+\leq (ac\ell)^c.
\end{equation}
Moreover, $\Fix(\lambda_x)$ contains a strong left ideal of finite
index in $B$.
\end{proposition}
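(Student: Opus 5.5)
The plan is to reduce everything to a single subgroup, \(\Fix(\lambda_x)\), and then to obtain \(M\) as the intersection of its images under the lambda maps. The starting point is the conjugation identity \(\lambda_{g\circ x\circ\bar g}=\lambda_g\lambda_x\lambda_g^{-1}\), which holds because \(\lambda\) is a homomorphism. It gives
\[
  \lambda_g\bigl(\Fix(\lambda_y)\bigr)=\Fix(\lambda_{g\circ y\circ\bar g})
  \qquad\text{for all } g,y\in B .
\]
Put
\[
  M=\bigcap_{y\in[x]_\circ}\Fix(\lambda_y).
\]
This is an intersection of at most \(c\) additive subgroups. Each \(\lambda_g\) permutes the family \(\{\Fix(\lambda_y):y\in[x]_\circ\}\), so \(M\) is \(\lambda\)-invariant, i.e.\ a left ideal, and clearly \(M\subseteq\Fix(\lambda_x)\). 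Since \(\Fix(\lambda_y)=\lambda_g(\Fix(\lambda_x))\) and \(\lambda_g\in\Aut(B,+)\), every term has the same additive index as \(\Fix(\lambda_x)\). By Poincaré's bound for intersections, \eqref{eq:index-bound} then follows from
\[
  |B:\Fix(\lambda_x)|_+\leq ac\ell .
\]

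To prove this bound, I will use the map \(d(b)=-b+\lambda_x(b)\). A short computation shows that \(d(b)=d(b')\) holds if and only if \(b'-b\in\Fix(\lambda_x)\). Hence the number of right cosets of \(\Fix(\lambda_x)\) in \((B,+)\) equals \(|d(B)|\), and it suffices to show \(|d(B)|\leq ac\ell\). Write \(y=\bar b\circ x\circ b\in[x]_\circ\), so that \(x\circ b=b\circ y=b+\lambda_b(y)\). Then
\[
  d(b)=-b-x+x\circ b=(-b-x+b)+\lambda_b(y).
\]
The first summand is an additive conjugate of \(-x\), so it takes at most \(a\) values. The second lies in \(\bigcup_{y\in[x]_\circ}[y]_\lambda\).

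The main obstacle is to show that this last union has at most \(c\ell\) elements, i.e.\ that each conjugate \(y\in[x]_\circ\) has \(|[y]_\lambda|\leq\ell\), or at least that \(\lambda_b(\bar b\circ x\circ b)\) ranges over a set of size at most \(c\ell\). I would first expand \(y=g\circ x\circ\bar g=g+\lambda_g(x)-\lambda_y(g)\), using \(\lambda_{g}(\bar g)=-g\). Applying \(\lambda_h\) and trying to rewrite the outcome in terms of \(\lambda_{h\circ g}(x)\in[x]_\lambda\), an element of \([x]_\circ\), and additive conjugation should produce the required parametrisation. If this fails, the fallback is to recount \(d(b)\) directly: I would parametrise it by the triple consisting of \(-b-x+b\), of \(\bar b\circ x\circ b\), and of \(\lambda_{\bar b}(x)\) (or a similar \(\lambda\)-image of \(x\)), arranging the computation so that \(\lambda_b(y)\) is determined by this data. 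That triple takes at most \(ac\ell\) values, which gives exactly the stated bound.

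Finally, for the strong left ideal, take the additive normal core \(N=\Core_{(B,+)}(M)\). It has finite additive index because \(M\) does, and it is still \(\lambda\)-invariant: each \(\lambda_g\) is an additive automorphism preserving \(M\), so it preserves the family of additive conjugates of \(M\) and hence their intersection. Thus \(N\) is a strong left ideal of finite index with \(N\subseteq M\subseteq\Fix(\lambda_x)\).
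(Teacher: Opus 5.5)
\textbf{There is a genuine gap.} Several parts of your plan are fine and agree with the paper: the observation that $d(b)=d(b')$ iff $b'-b\in\Fix(\lambda_x)$, the intersection $M$ over $[x]_\circ$, its $\lambda$-invariance, the Poincar\'e estimate, and the additive core. The gap is the bound $|B:\Fix(\lambda_x)|_+\le ac\ell$. This is the real content of the proposition, and your proposal leaves it open.

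Your decomposition does not help. Since $\lambda_b(\bar b\circ x\circ b)=-b+x\circ b=(-b+x+b)+d(b)$, bounding the second summands is just the original problem again.

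Your first suggestion is false. In the brace $F$ of Section~4, take $x=2=(0,1,0,0)$. Every $\lambda_g$ fixes $x$ and $(F,+)$ is abelian, so $a=\ell=1$. Since $C_{(F,\circ)}(x)=\{d=0\}$, we get $c=2$ and $[x]_\circ=\{2,5\}$, with $5=8\circ 2\circ\bar 8$. But $[5]_\lambda=\{5,7\}$, so $\bigcup_{y\in[x]_\circ}[y]_\lambda=\{2,5,7\}$ has more than $c\ell$ elements.

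The fallback triple also fails. Two elements $b,b'$ share $\bar b\circ x\circ b$ and $\lambda_{\bar b}(x)$ exactly when $b'\in H\circ b$, where $H=C_{(B,\circ)}(x)\cap\Stab_\lambda(x)$. In a left brace one computes $d(h\circ b)=\lambda_h(d(b))$ for $h\in H$, and this need not equal $d(b)$. For a concrete case, take the left brace $\F_2^3\rtimes\Z/4$, with both factors trivial and the generator acting by a unipotent Jordan block $J$ of size $3$, so that $\lambda_{(v,k)}(w,m)=(J^kw,m)$. Let $x=(0,1)$, which lies in $H$, and $b=(e_3,0)$, where $(J-1)e_3=e_2$ and $(J-1)e_2=e_1$. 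Then $b$ and $x\circ b$ have the same triple, but $d(b)=(e_2,0)\neq(e_1+e_2,0)=d(x\circ b)$.

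\textbf{The missing idea.} You should not try to make $d$ constant on cosets of a finite-index multiplicative subgroup. Instead, cover $(B,+)$ by finitely many additive cosets and apply Neumann's lemma (Lemma~\ref{lem:neumann-covering}). The paper proceeds as follows.
\begin{enumerate}[label=\textup{(\roman*)}]
 \item Let $(B,\circ)$ act on $B$ by $\delta_b(z)=b+\lambda_b(z)-b$. On $H$ this restricts to additive conjugation of $x$, so the stabiliser $A_x=H\cap C_{(B,+)}(x)$ is a multiplicative subgroup of index at most $ac\ell$.
 \item For $u\in A_x$ one has $u+x=u\circ x=x\circ u=x+\lambda_x(u)$, so $A_x\subseteq\Fix(\lambda_x)$.
 \item For a left transversal $T$ of $A_x$ in $(B,\circ)$, we get $t\circ A_x=t+\lambda_t(A_x)\subseteq t+\Fix(\lambda_{t\circ x\circ\bar t})$.
 \item Hence $(B,+)$ is covered by $|T|\le ac\ell$ cosets of subgroups that all have the same index as $\Fix(\lambda_x)$. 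Neumann's lemma then gives $|B:\Fix(\lambda_x)|_+\le ac\ell$.
\end{enumerate}
These are cosets of the conjugate subgroups $\lambda_t(\Fix(\lambda_x))$, not of $\Fix(\lambda_x)$ itself. That is exactly why $d$ is not constant on them, and why a direct count of $d(B)$ of the kind you attempt does not go through.
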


\begin{proof}
Set
\[
 H=C_{(B,\circ)}(x)\cap\Stab_\lambda(x),
 \qquad
 \Stab_\lambda(x)=\{b\in B:\lambda_b(x)=x\}.
\]
This is a subgroup of $(B,\circ)$ and
\begin{equation}\label{eq:H-index}
  |B:H|_\circ\leq c\ell.
\end{equation}

For $b\in B$, define a permutation of $B$ by
\[
  \delta_b(z)=b+\lambda_b(z)-b.
\]
The diagonal map
\[
 (B,\circ)\longrightarrow (B,+)\rtimes_\lambda(B,\circ),
 \qquad b\longmapsto(b,b),
\]
is a group homomorphism, since
\[
  (b,b)(d,d)=(b+\lambda_b(d),b\circ d)=(b\circ d,b\circ d).
\]
Consequently, $b\mapsto\delta_b$ is an action of $(B,\circ)$ on $B$.
For $h\in H$ one has $\lambda_h(x)=x$, and therefore
\[
  \delta_h(x)=h+x-h.
\]
Let
\[
 A_x=\Stab_H(x)=H\cap C_{(B,+)}(x),
\]
where the stabilizer is taken for the $\delta$-action.  It follows that
$A_x$ is a subgroup of $(B,\circ)$ and
\begin{equation}\label{eq:Ax-index}
  |H:A_x|_\circ\leq a.
\end{equation}

Every element of $A_x$ is fixed by $\lambda_x$.  Indeed, for $u\in
A_x$, the definitions of $H$ and $A_x$ give
\[
 u\circ x=u+\lambda_u(x)=u+x=x+u
\]
and
\[
 u\circ x=x\circ u=x+\lambda_x(u).
\]
Left cancellation in $(B,+)$ yields $\lambda_x(u)=u$.  Hence
\begin{equation}\label{eq:Ax-in-fix}
  A_x\subseteq F_x:=\Fix(\lambda_x).
\end{equation}

Let $T$ be a left transversal for $A_x$ in $(B,\circ)$.  By
\eqref{eq:H-index} and \eqref{eq:Ax-index}, it is finite and
\begin{equation}\label{eq:T-bound}
 |T|=|B:A_x|_\circ\leq ac\ell.
\end{equation}
For $t\in T$, put
\[
  x_t=t\circ x\circ\bar t,
  \qquad F_t=\Fix(\lambda_{x_t}).
\]
If $u\in A_x$, then \eqref{eq:Ax-in-fix} and the homomorphism property
of $\lambda$ give
\[
 \lambda_{x_t}(\lambda_t(u))
 =\lambda_t\lambda_x\lambda_t^{-1}(\lambda_t(u))
 =\lambda_t\lambda_x(u)
 =\lambda_t(u).
\]
It follows that
\[
 t\circ A_x=t+\lambda_t(A_x)\subseteq t+F_t.
\]
Since the multiplicative cosets $t\circ A_x$ cover $B$, the additive
group admits the finite coset covering
\begin{equation}\label{eq:cover}
  (B,+)=\bigcup_{t\in T}(t+F_t).
\end{equation}

Moreover,
\[
  F_t=\lambda_t(F_x),
\]
so all subgroups occurring in \eqref{eq:cover} have the same additive
index.  Lemma~\ref{lem:neumann-covering} and \eqref{eq:T-bound} therefore imply
\begin{equation}\label{eq:Fx-index}
  |B:F_x|_+\leq |T|\leq ac\ell.
\end{equation}

Finally, let $\mathcal C=[x]_\circ$ and define
\[
  M=\bigcap_{y\in\mathcal C}\Fix(\lambda_y).
\]
Every factor has the same finite additive index as $F_x$, because if
$y=b\circ x\circ\bar b$, then
\[
 \Fix(\lambda_y)=\lambda_b(F_x).
\]
Hence $M$ has finite additive index and, using
\eqref{eq:Fx-index},
\[
 |B:M|_+
 \leq\prod_{y\in\mathcal C}|B:\Fix(\lambda_y)|_+
 \leq(ac\ell)^c.
\]

It remains to observe that $M$ is a left ideal.  For every $b\in B$,
\[
 \lambda_b\bigl(\Fix(\lambda_y)\bigr)
 =\Fix(\lambda_{b\circ y\circ\bar b}).
\]
Multiplicative conjugation by $b$ permutes $\mathcal C$, and thus
$\lambda_b(M)=M$.  Therefore $M$ is an additive subgroup invariant
under all $\lambda_b$, hence a left ideal.  Since $x\in\mathcal C$, we
also have $M\subseteq F_x$.

Finally, the additive core
$L=\Core_{(B,+)}(M)$ has finite index and lies in $M$.  Since
\[
 \lambda_d(b+M-b)=\lambda_d(b)+M-\lambda_d(b)
\]
for $b,d\in B$, every $\lambda_d$ preserves $L$.  Consequently $L$ is
a strong left ideal and $L\subseteq M\subseteq F_x$, as required.
\end{proof}

\begin{theorem}\label{thm:s-element}
Let $B$ be a skew brace and let $x\in B$.  If
\[
 x\in\FC(B,+)\cap\FC(B,\circ)\cap\lambda_f(B),
\]
then $x$ is an $(s)$-element.  Equivalently, Question~5.21 of
\cite{CPV} has an affirmative answer for arbitrary skew braces.
\end{theorem}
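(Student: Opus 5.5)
We must show that the four sets $\{g*x:g\in B\}$, $\{x*g:g\in B\}$, $[x]_+$ and $[x]_\circ$ are finite. The last two are finite by hypothesis, so only the star orbits need work.

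For the left star orbit, note that $g*x=\lambda_g(x)-x$. As $g$ ranges over $B$, $\lambda_g(x)$ ranges over the finite set $[x]_\lambda$, so $|\{g*x:g\in B\}|\le \ell$. This is immediate.

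The right star orbit $x*g=\lambda_x(g)-g$ is the real content, and Proposition~\ref{prop:fixed-left-ideal} handles it. Apply that proposition to obtain a left ideal $M\subseteq\Fix(\lambda_x)$ with $|B:M|_+<\infty$; alternatively use the strong left ideal $L\subseteq M$ given there. Write an arbitrary $g$ as $g=m+t$ with $m\in M$ and $t$ in a finite right transversal $R$ of $M$ in $(B,+)$. Since $\lambda_x$ is an additive automorphism fixing $m$,
\[
 x*g=\lambda_x(m)+\lambda_x(t)-t-m=m+(\lambda_x(t)-t)-m .
\]
Thus $x*g$ is an additive conjugate, by an element of $M$, of one of the finitely many elements $x*t$ with $t\in R$.

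The one remaining point is that this conjugation need not be trivial when $(B,+)$ is nonabelian. I would handle it as follows. Note that $x*t=-x+x\circ t-t$, and show that each element $x*t$ has finite additive conjugacy class. One route is to use that $x\in\FC(B,+)$ and $x\circ t\circ\bar x\in\ldots$; more concretely, $x*t=-x+x\circ t-t$. The cleaner route is to choose $g=t+m$ instead, using a left transversal. Then
\[
 x*g=\lambda_x(t)+\lambda_x(m)-m-t=\lambda_x(t)+m-m-t=\lambda_x(t)-t=x*t .
\]
So writing each $g$ as $t+m$ with $t$ in a left transversal of $M$ in $(B,+)$ removes the conjugation issue entirely, and gives
\[
 |\{x*g:g\in B\}|\le|B:M|_+\le(ac\ell)^c .
\]
This is the key step, and it only requires choosing the transversal on the correct side.

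Combining the three bounds shows all four sets are finite, so $x$ is an $(s)$-element. The equivalence with Question~5.21 of \cite{CPV} is just a restatement, since that question asks exactly whether the right star orbit is finite under these hypotheses.
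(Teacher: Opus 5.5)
Your argument is correct and is essentially the paper's proof. The paper writes $g=t\circ m=t+\lambda_t(m)$, using that $M$ is a left ideal so its multiplicative and additive cosets coincide, and then performs exactly your computation $x*(t+v)=\lambda_x(t)+v-v-t=x*t$; your direct choice $g=t+m$ with $t$ in an additive left transversal is the same step, and it makes clear that only $M\subseteq\Fix(\lambda_x)$ with $|B:M|_+<\infty$ is used (the abandoned $g=m+t$ attempt and the unfinished $\FC$ remark can simply be deleted).
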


\begin{proof}
Let $M$ be the left ideal supplied by
Proposition~\ref{prop:fixed-left-ideal}.  Since $M$ is a left ideal,
for every $t\in B$,
\[
  t\circ M=t+\lambda_t(M)=t+M.
\]
In particular, its additive and multiplicative cosets coincide, so a
finite transversal $S$ exists for both group structures.

Take $g\in B$ and write $g=t\circ m$ with $t\in S$ and $m\in M$.
Put $v=\lambda_t(m)\in M$.  Then $g=t+v$, and
$\lambda_x(v)=v$ because $M\subseteq\Fix(\lambda_x)$.  Remembering
that $-(t+v)=-v-t$ in the possibly nonabelian additive group, we get
\begin{align*}
 x*g
 &=\lambda_x(g)-g\\
 &=\lambda_x(t+v)-(t+v)\\
 &=\lambda_x(t)+v-v-t\\
 &=\lambda_x(t)-t
 =x*t.
\end{align*}
Thus $g\mapsto x*g$ is constant on every coset $t\circ M$, and
\[
  |\{x*g:g\in B\}|\leq |B:M|_+<\infty.
\]

The remaining three sets in the definition of an $(s)$-element are
finite directly from the hypotheses:
\[
 \{g*x:g\in B\}=\{\lambda_g(x)-x:g\in B\},
\]
while the additive and multiplicative conjugates of $x$ form
$[x]_+$ and $[x]_\circ$, respectively.
\end{proof}

\begin{corollary}\label{cor:orbit-bound}
With the notation of Proposition~\ref{prop:fixed-left-ideal},
\[
  |\{x*g:g\in B\}|\leq(ac\ell)^c.
\]
Consequently, the union of the four families occurring in the
definition of an $(s)$-element has cardinality at most
\[
  a+c+\ell+(ac\ell)^c.
\]
\end{corollary}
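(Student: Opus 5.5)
The first inequality, $|\{x*g:g\in B\}|\leq(ac\ell)^c$, should be read off from the proof of Theorem~\ref{thm:s-element}. There, with $M$ the left ideal supplied by Proposition~\ref{prop:fixed-left-ideal}, we showed that $g\mapsto x*g$ is constant on each coset $t\circ M=t+M$. Hence the number of distinct values of $x*g$ is at most the number of such cosets, which is $|B:M|_+$. By \eqref{eq:index-bound}, $|B:M|_+\leq(ac\ell)^c$. For this step I only need the explicit bound from the proposition, not merely finiteness of the index.

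For the second claim, I will bound each of the four families separately and then use the union bound. The sets $[x]_+$ and $[x]_\circ$ have cardinalities $a$ and $c$ by definition. The left star orbit is
\[
\{g*x:g\in B\}=\{\lambda_g(x)-x:g\in B\}.
\]
It is the image of $[x]_\lambda$ under the injective map $y\mapsto y-x$, so its cardinality is exactly $\ell$. The right star orbit is bounded by the first part. Since the cardinality of a union is at most the sum of the cardinalities, the total is at most $a+c+\ell+(ac\ell)^c$.

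No step here is a real obstacle. The only point needing care is to cite the precise bound \eqref{eq:index-bound} and not just the finiteness of $|B:M|_+$. One should also note that the coset count is the same for additive and multiplicative cosets, because $t\circ M=t+M$ for the left ideal $M$.
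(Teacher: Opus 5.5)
Your proposal is correct and follows the paper's route; the paper gives no separate proof for this corollary. The first bound comes from the constancy of $g\mapsto x*g$ on the additive cosets $t+M$, shown in the proof of Theorem~\ref{thm:s-element}, combined with \eqref{eq:index-bound}, and the second is the union bound, where your observation that $y\mapsto y-x$ is a bijection correctly gives $|\{g*x:g\in B\}|=\ell$.
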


\begin{corollary}\label{cor:s-characterization}
For every skew brace $B$ and every $x\in B$, the following are
equivalent:
\begin{enumerate}[label=\textup{(\arabic*)}]
 \item $x$ is an $(s)$-element;
 \item $x\in\FC(B,+)\cap\FC(B,\circ)\cap\lambda_f(B)$.
\end{enumerate}
\end{corollary}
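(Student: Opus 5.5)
The two implications have different flavours, so I would prove them separately. The implication (2)$\Rightarrow$(1) is exactly Theorem~\ref{thm:s-element}, so nothing new is needed there. For (1)$\Rightarrow$(2), I assume that $x$ is an $(s)$-element, so that the four sets $\{g*x:g\in B\}$, $\{x*g:g\in B\}$, $[x]_+$ and $[x]_\circ$ are finite. I then have to show that $x$ lies in $\FC(B,+)$, in $\FC(B,\circ)$ and in $\lambda_f(B)$.

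The first two memberships are immediate. The sets $[x]_+$ and $[x]_\circ$ are precisely the additive and multiplicative conjugacy classes of $x$, and these are finite by hypothesis. That gives $x\in\FC(B,+)\cap\FC(B,\circ)$.

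For the $\lambda$-orbit, I would use the identity
\[
  \lambda_g(x)=(\lambda_g(x)-x)+x=g*x+x .
\]
Since $g*x$ is defined as $\lambda_g(x)-x$, adding $x$ on the right recovers $\lambda_g(x)$, and this holds even when $(B,+)$ is nonabelian. Hence
\[
  [x]_\lambda=\{\,y+x : y\in\{g*x:g\in B\}\,\}.
\]
Right translation by $x$ is a bijection of $B$, so $|[x]_\lambda|=|\{g*x:g\in B\}|<\infty$. Therefore $x\in\lambda_f(B)$, which completes (1)$\Rightarrow$(2).

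There is no real obstacle in this argument. The one point needing care is the order of the terms in the noncommutative additive group: $x$ must be added on the right to recover $\lambda_g(x)$. The finiteness of the right star orbit $\{x*g:g\in B\}$ is not used in this direction. It is the nontrivial part of the equivalence, and it is supplied by Proposition~\ref{prop:fixed-left-ideal} through Theorem~\ref{thm:s-element}.
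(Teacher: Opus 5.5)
Your proposal is correct and follows the paper's proof: (2)$\Rightarrow$(1) is Theorem~\ref{thm:s-element}, and (1)$\Rightarrow$(2) is read off from the definition. For the $\lambda$-orbit you also spell out the step the paper leaves implicit, namely that $\lambda_g(x)=g*x+x$, so right translation by $x$ maps $\{g*x:g\in B\}$ bijectively onto $[x]_\lambda$.
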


\begin{proof}
The implication $(1)\Rightarrow(2)$ is immediate from the definition,
and $(2)\Rightarrow(1)$ is Theorem~\ref{thm:s-element}.
\end{proof}

\begin{remark}
Theorem~\ref{thm:s-element} is elementwise: the ambient skew brace need
not be a $\theta_f$-skew brace.  Thus it removes the global hypothesis
used in \cite[Proposition~5.17]{CPV} and includes the two-sided and
socle cases discussed there.
\end{remark}

\subsection{Consequences for finite presentability}

\label{sec:finite-presentation}

Presentations of skew braces were introduced and developed in
\cite{Trombetti}.  The recent work \cite{Free} provides workable
constructions of free skew braces in several relevant varieties, thereby
making the free objects underlying such presentations substantially more
explicit. A complementary framework is developed in \cite{poly}: almost polycyclic skew braces are finitely presented \cite[Remark 3.7]{poly}, and their residual finiteness and control by finite homomorphic images lead
to solvability of the word and conjugacy problems for structure skew braces of finite solutions.
The consequence below instead derives finite presentability from finite $\theta$-orbits. The statements in this section are not independent
finite-presentation theorems: they follow by combining
\cite[Proposition~5.8, Corollary~5.11 and Lemma~5.13]{CPV} with
\cite[Theorem~3.2 and Corollary~3.4]{Trombetti}.  We record them because
they connect the finite-orbit conditions considered above with the
finite-presentation framework of \cite{Trombetti}.

\begin{proposition}\label{prop:finite-presentation-consequence}\label{thm:finite-presentation}
Let $B$ be a skew brace generated, as a skew brace, by a finite subset
of $\theta_f(B)$.  Then $B$ is a finitely generated
$\theta_f$-skew brace and $\operatorname{Soc}(B)$ is a finitely
generated trivial ideal of finite index in $B$.  Moreover, $B$ is
finitely presented as a skew brace, and both $(B,+)$ and $(B,\circ)$
are finitely presented groups.

If $x_1,\dots,x_t$ generate $(B,+)$, then
\begin{equation}\label{eq:socle-bound}
 |B:\operatorname{Soc}(B)|
 \leq \prod_{i=1}^t |[x_i]_\theta|.
\end{equation}
In particular, every finitely generated $\theta_f$-skew brace is
finitely presented and virtually trivial.
\end{proposition}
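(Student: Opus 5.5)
The plan has four stages: pass from a finite set of generators in $\theta_f(B)$ to a global $\theta_f$-structure, obtain the socle as a finite-index trivial ideal with the stated bound, show the socle is finitely generated, and conclude finite presentability from Trombetti's extension theorem. Throughout I lean on \cite{CPV} and \cite{Trombetti}, as the paragraph preceding the statement indicates.

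\emph{Step 1: $B$ is a $\theta_f$-skew brace.} Let $X\subseteq\theta_f(B)$ be a finite generating set of $B$ as a skew brace. First I show that $\theta_f(B)$ is itself a sub-skew brace of $B$; this is the content I expect from \cite[Proposition~5.8]{CPV}. Heuristically, $[x+y]_\theta$ and $[x\circ y]_\theta=[x+\lambda_x(y)]_\theta$ are controlled by $[x]_\theta$ and $[y]_\theta$, because $\theta$-orbits are unions of $\lambda$-orbits and additive conjugacy classes. Since $X\subseteq\theta_f(B)$ and $X$ generates $B$, this gives $B=\theta_f(B)$. Consequently every element has finite $\lambda$-orbit and finite additive conjugacy class, and in particular every $\lambda_b$-orbit is finite.

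\emph{Step 2: the socle has finite index, with the bound \eqref{eq:socle-bound}.} Suppose $x_1,\dots,x_t$ generate $(B,+)$; in the general case a finite additive generating set can be extracted using \cite[Corollary~5.11]{CPV}. Let $B$ act on itself by the $\theta$-action $z\mapsto a+\lambda_b(z)-a$. An element $b$ lies in $\operatorname{Soc}(B)=\ker\lambda\cap Z(B,+)$ exactly when $\lambda_b$ fixes every $x_i$ and $b$ centralizes every $x_i$ additively. I will therefore compare cosets of the socle with tuples in $\prod_i[x_i]_\theta$, via a map of the form $b\mapsto(\lambda_b(x_i))_i$ combined with the additive conjugation data. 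This should give
\[
|B:\operatorname{Soc}(B)|\le\prod_{i=1}^t|[x_i]_\theta|.
\]
This is presumably \cite[Lemma~5.13]{CPV}. The socle is a trivial ideal, and by Theorem~\ref{thm:index-comparison} the additive and multiplicative indices agree.

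\emph{Step 3: finite generation of the socle.} Since $\operatorname{Soc}(B)$ has finite index in the finitely generated group $(B,+)$, Schreier's lemma shows that $\operatorname{Soc}(B)$ is finitely generated as a group. It is an abelian group (a trivial brace), so it is finitely presented.

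\emph{Step 4: finite presentability.} $B$ is an extension of the finitely presented trivial ideal $\operatorname{Soc}(B)$ by the finite skew brace $B/\operatorname{Soc}(B)$, which is finitely presented. Trombetti's extension theorem \cite[Theorem~3.2]{Trombetti} then makes $B$ finitely presented, and \cite[Corollary~3.4]{Trombetti} gives finite presentability of $(B,+)$ and $(B,\circ)$. Alternatively, each of these groups is virtually free abelian of finite rank, hence finitely presented. The final sentence of the statement is the special case $X=B$ finite generation with $B=\theta_f(B)$.

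\emph{Main obstacle.} The delicate point is Step 2: turning skew-brace generation into additive generation and proving the product bound without circularity. I would first try proving that the stabilizer of each $x_i$ under the combined $\theta$-action has index at most $|[x_i]_\theta|$, and that the intersection of these stabilizers lies in the socle.
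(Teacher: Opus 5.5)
\textbf{Gap in Step 2.} Your outline follows the same chain of citations as the paper's proof: \cite[Proposition~5.8]{CPV} to get $B=\theta_f(B)$, \cite[Corollary~5.11]{CPV} for finite generation of both groups, \cite[Lemma~5.13]{CPV} for the socle, then Schreier and Trombetti. The one step you try to argue yourself, the bound \eqref{eq:socle-bound}, does not go through.

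The stabilizers of the $x_i$ under the $\theta$-action are subgroups of $\Gamma=(B,+)\rtimes_\lambda(B,\circ)$, not of $B$. Pulling their intersection back to $B$ along $b\mapsto(b,0)$ and $b\mapsto(0,b)$ gives $Z(B,+)$ and $\ker\lambda$ \emph{separately}. Pulling it back along the diagonal $b\mapsto(b,b)$ gives the kernel of $b\mapsto\delta_b$, where $\delta_b(z)=b+\lambda_b(z)-b$, and this kernel need not lie in $\operatorname{Soc}(B)$. Likewise, your map ``$b\mapsto(\lambda_b(x_i))_i$ combined with the additive conjugation data'' records two elements per generator. It therefore lands in $\prod_i\bigl([x_i]_\lambda\times[x_i]_+\bigr)$, not in $\prod_i[x_i]_\theta$.

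What is true is that $b\circ\operatorname{Soc}(B)=b'\circ\operatorname{Soc}(B)$ if and only if $\lambda_b=\lambda_{b'}$ and $-b+b'\in Z(B,+)$. Since $\lambda_b$ and additive conjugation by $b$ are each determined by their values on the $x_i$, this gives
\[
 |B:\operatorname{Soc}(B)|\le\prod_{i=1}^t|[x_i]_+|\,|[x_i]_\lambda|\le\prod_{i=1}^t|[x_i]_\theta|^2 .
\]

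\textbf{The stated bound is false.} This gap cannot be closed. The paper's proof does not derive \eqref{eq:socle-bound}; it attributes it to \cite[Lemma~5.13]{CPV}, and as displayed it fails. Here is a counterexample.
\begin{itemize}
 \item Let $(B,+)=C_2\times S_3$ with $C_2=\{1,c\}$, let $\psi\colon C_2\to S_3$ send $c$ to $(12)$, and set $\lambda_{(z,h)}(z',h')=(z',\psi(z)h'\psi(z)^{-1})$.
 \item Each $\lambda_a$ is inner, and $\lambda_a(b)$ has the same $C_2$-coordinate as $b$. Hence $\lambda_{a+\lambda_a(b)}=\lambda_a\lambda_b$, and $a\circ b=a+\lambda_a(b)$ defines a skew brace of order $12$.
 \item Here $Z(B,+)=C_2\times 1$ and $\lambda_{(c,1)}\neq\mathrm{id}$. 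So $\operatorname{Soc}(B)$ is trivial and $|B:\operatorname{Soc}(B)|=12$.
 \item The elements $x_1=(c,(123))$ and $x_2=(1,(12))$ generate $(B,+)$. Since every $\lambda_b$ is inner, $[x_i]_\theta$ is the conjugacy class of $x_i$. These classes have sizes $2$ and $3$, with product $6<12$.
\end{itemize}

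Only finiteness of $|B:\operatorname{Soc}(B)|$ is used afterwards, so the rest of your argument survives with the corrected bound.

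\textbf{A smaller point in Step 4.} In the paper, \cite[Corollary~3.4]{Trombetti} is what upgrades the finitely generated abelian group $(\operatorname{Soc}(B),+)=(\operatorname{Soc}(B),\circ)$ to a skew brace finitely presented \emph{as a skew brace}. The paper needs this to verify the hypotheses of \cite[Theorem~3.2]{Trombetti}. You skip that upgrade, and instead cite Corollary~3.4 for the underlying groups of $B$. For those groups the paper uses your alternative argument: they are finite extensions of a finitely generated abelian group.
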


\begin{proof}
By \cite[Proposition~5.8]{CPV}, $\theta_f(B)$ is a sub-skew brace of
$B$.  Since it contains a generating set of $B$, we have
$B=\theta_f(B)$.  Hence \cite[Corollary~5.11]{CPV} implies that both
$(B,+)$ and $(B,\circ)$ are finitely generated.

Choose additive generators $x_1,\dots,x_t$.  Since each $x_i$ belongs
to $\theta_f(B)$, \cite[Lemma~5.13]{CPV} shows that
$S=\operatorname{Soc}(B)$ has finite index in $B$ and gives
\eqref{eq:socle-bound}.  The group $(S,+)$ is finitely generated,
because it has finite index in the finitely generated group $(B,+)$.
For $s,u\in S$ one has
\[
 s\circ u=s+\lambda_s(u)=s+u=u+s=u\circ s,
\]
so $S$ is a trivial brace and $(S,+)=(S,\circ)$ is a finitely
generated abelian group.  Thus $S$ is finitely presented as a skew
brace by \cite[Corollary~3.4]{Trombetti}.

The quotient $B/S$ is finite.  In particular, it is finitely presented
as a skew brace, $(B/S,+)$ is finitely presented, and $(B/S,\circ)$ is
finitely generated.  Therefore all the hypotheses of
\cite[Theorem~3.2]{Trombetti} are satisfied, and $B$ is finitely
presented as a skew brace.  Finally, both underlying groups are finite
extensions of the finitely generated abelian group $S$ and are
therefore finitely presented.
\end{proof}

\begin{corollary}\label{cor:theta-generators}
Let $B$ be generated as a skew brace by $x_1,\dots,x_d$.  If, for each
$i$, the additive conjugacy class $[x_i]_+$ and the $\lambda$-orbit
$[x_i]_\lambda$ are finite, then $B$ is finitely presented.  In
particular this holds if, for every $i$,
\[
  x_i\in\FC(B,+)\cap\FC(B,\circ)\cap\lambda_f(B).
\]
Under the latter hypothesis every $x_i$ is an $(s)$-element.
\end{corollary}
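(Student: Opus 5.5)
The plan is to reduce Corollary~\ref{cor:theta-generators} to Proposition~\ref{prop:finite-presentation-consequence}. The key observation is that the hypotheses on each generator say exactly that $x_i\in\theta_f(B)$. Indeed, $\theta_f(B)=\lambda_f(B)\cap\FC(B,+)$ by definition. Finiteness of $[x_i]_\lambda$ means $x_i\in\lambda_f(B)$, and finiteness of $[x_i]_+$ means $x_i\in\FC(B,+)$. So $\{x_1,\dots,x_d\}$ is a finite subset of $\theta_f(B)$ generating $B$ as a skew brace. Proposition~\ref{prop:finite-presentation-consequence} then applies verbatim and gives that $B$ is finitely presented as a skew brace; it even gives that both underlying groups are finitely presented and that $B$ is virtually trivial.

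For the ``in particular'' clause, I note that
\[
x_i\in\FC(B,+)\cap\FC(B,\circ)\cap\lambda_f(B)
\]
implies $x_i\in\FC(B,+)\cap\lambda_f(B)=\theta_f(B)$, simply by discarding the multiplicative condition. The first part of the corollary then yields finite presentability. The last sentence, that each $x_i$ is an $(s)$-element, is exactly Theorem~\ref{thm:s-element} applied to each $x_i$ separately. Its hypotheses are precisely the three finiteness conditions $[x_i]_+$, $[x_i]_\circ$, $[x_i]_\lambda$ finite.

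I do not expect a genuine obstacle here; all of the substance sits in Proposition~\ref{prop:finite-presentation-consequence}, through the cited results of \cite{CPV} and \cite{Trombetti}, and in Proposition~\ref{prop:fixed-left-ideal}. The one point to check is that the equivalence ``$x\in\theta_f(B)$ iff $[x]_\theta$ is finite'' is not needed in its nontrivial direction. We only use the definition $\theta_f(B)=\lambda_f(B)\cap\FC(B,+)$, which is stated in the introduction, so membership follows directly from the two finiteness hypotheses. If one wants the explicit bound, I would note that $|[x_i]_\theta|\le |[x_i]_+|\cdot|[x_i]_\lambda|$. Any element $a+\lambda_b(x_i)-a$ is an additive conjugate of some element of $[x_i]_\lambda$. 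Each $\lambda_b(x_i)$ has additive conjugacy class of size $|[x_i]_+|$, since $\lambda_b$ is an additive automorphism. Plugging this into \eqref{eq:socle-bound} then bounds the index of the socle, though this is optional for the statement.
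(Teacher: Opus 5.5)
Your proposal is correct and matches the paper's proof: the two finiteness hypotheses place each $x_i$ in $\theta_f(B)=\lambda_f(B)\cap\FC(B,+)$, so Proposition~\ref{prop:finite-presentation-consequence} applies directly, and the $(s)$-element claim is Theorem~\ref{thm:s-element} applied to each $x_i$. Your side bound $|[x_i]_\theta|\le|[x_i]_+|\cdot|[x_i]_\lambda|$ is also correct, though the statement does not need it.
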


\begin{proof}
The first two finiteness conditions say precisely that every $x_i$
belongs to $\theta_f(B)$.  Apply
Proposition~\ref{prop:finite-presentation-consequence}.  Under the
additional multiplicative condition, Theorem~\ref{thm:s-element}
gives the last assertion.
\end{proof}

For skew braces satisfying property $(S)$ one can replace the socle by
the annihilator.

\begin{corollary}\label{cor:S-braces}
If $B$ is a finitely generated $(S)$-skew brace, then
$\operatorname{Ann}(B)$ is a finitely generated trivial ideal of finite
index.  In particular, $B$ and both underlying groups are finitely
presented.
\end{corollary}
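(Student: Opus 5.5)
The statement to prove is Corollary~\ref{cor:S-braces}. The plan is to reduce it to Proposition~\ref{prop:finite-presentation-consequence}, replacing the socle by the annihilator at the end.

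\emph{Step 1: every element lies in $\theta_f(B)$.} In an $(S)$-skew brace every element is an $(s)$-element. So for each $x$ the sets $[x]_+$, $[x]_\circ$ and $\{g*x:g\in B\}$ are finite. Since $\lambda_g(x)=g*x+x$, the orbit $[x]_\lambda$ is also finite. Hence $x\in\FC(B,+)\cap\lambda_f(B)=\theta_f(B)$.

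\emph{Step 2: apply the earlier proposition.} Take a finite generating set of $B$; by Step~1 it lies in $\theta_f(B)$. Proposition~\ref{prop:finite-presentation-consequence} then gives:
\begin{itemize}
\item $(B,+)$ and $(B,\circ)$ are finitely generated (indeed finitely presented);
\item $B$ is finitely presented as a skew brace;
\item $\operatorname{Soc}(B)$ is a finitely generated trivial ideal of finite index.
\end{itemize}
This already settles the ``in particular'' clause.

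\emph{Step 3: show $\operatorname{Ann}(B)$ has finite index.} Recall that
\[
\operatorname{Ann}(B)=\operatorname{Soc}(B)\cap Z(B,\circ)=\ker\lambda\cap Z(B,+)\cap Z(B,\circ).
\]
Pick finite generating sets $g_1,\dots,g_r$ of $(B,+)$ and $h_1,\dots,h_s$ of $(B,\circ)$.
\begin{itemize}
\item Each $g_i$ has finite additive class, so $C_{(B,+)}(g_i)$ has finite index. Therefore $Z(B,+)=\bigcap_i C_{(B,+)}(g_i)$ has finite additive index.
\item In the same way, $Z(B,\circ)=\bigcap_j C_{(B,\circ)}(h_j)$ has finite multiplicative index.
\item $\operatorname{Soc}(B)$ has finite index by Step~2.
\end{itemize}
By Theorem~\ref{thm:index-comparison}, finite index in either group is the same as finite index in the other, for sub-skew braces. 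The subgroup $Z(B,+)$ by itself need not be a sub-skew brace, so I would intersect within $\operatorname{Soc}(B)$. On $\operatorname{Soc}(B)$ we have $s\circ u=s+u$, so the two group structures agree there. Hence
\[
Z(B,+)\cap\operatorname{Soc}(B)\quad\text{and}\quad Z(B,\circ)\cap\operatorname{Soc}(B)
\]
are subgroups of the finitely generated abelian group $(\operatorname{Soc}(B),+)=(\operatorname{Soc}(B),\circ)$. Each has finite index in it, since intersecting a finite-index subgroup of the ambient group with a subgroup keeps finite index in that subgroup. Their intersection is $\operatorname{Ann}(B)$, so it has finite index in $\operatorname{Soc}(B)$ additively, and hence in $B$ additively. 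Theorem~\ref{thm:index-comparison}, or simply the fact that $\operatorname{Ann}(B)$ is an ideal, then makes the two indices agree.

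\emph{Step 4: finite generation and triviality.} $\operatorname{Ann}(B)$ is a finite-index subgroup of the finitely generated group $(B,+)$, so it is finitely generated. It lies in the socle, so it is a trivial brace.

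\emph{Main obstacle.} The delicate point is the bookkeeping in Step~3: one must not intersect subgroups of different group structures carelessly. This is exactly why I work inside $\operatorname{Soc}(B)$, where $+$ and $\circ$ coincide.
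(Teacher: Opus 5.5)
Your proposal is correct and follows essentially the paper's proof. You reduce to Proposition~\ref{prop:finite-presentation-consequence} to obtain finite presentability and a finite-index socle, use finite generation of $(B,\circ)$ together with finite multiplicative conjugacy classes to show that $Z(B,\circ)$ has finite index, and then intersect to get $\operatorname{Ann}(B)=\operatorname{Soc}(B)\cap Z(B,\circ)$. The differences are only cosmetic: you derive the $\theta_f$ property directly from the definition of an $(s)$-element instead of citing \cite[Proposition~5.17]{CPV}, and your detour through $\operatorname{Soc}(B)$ (including the redundant $Z(B,+)$ computation, since $\operatorname{Soc}(B)\subseteq Z(B,+)$) is harmless but unnecessary, because $\operatorname{Soc}(B)$ and $Z(B,\circ)$ are already both finite-index subgroups of $(B,\circ)$.
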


\begin{proof}
By \cite[Proposition~5.17]{CPV}, $B$ is $\theta_f$ and $(B,\circ)$ is
an $FC$-group.  The latter group is finitely generated by
\cite[Corollary~5.11]{CPV}, and hence its centre has finite index.
Proposition~\ref{prop:finite-presentation-consequence} gives that
$\operatorname{Soc}(B)$ has finite index.  Therefore
\[
 \operatorname{Ann}(B)
 =\operatorname{Soc}(B)\cap Z(B,\circ)
\]
has finite index as well.  It is a trivial ideal and is finitely
generated because it has finite index in either finitely generated
underlying group.  The final assertion follows from
Proposition~\ref{prop:finite-presentation-consequence}.
\end{proof}

\begin{remark}
Proposition~\ref{prop:finite-presentation-consequence} is a synthesis
of the cited results of \cite{CPV} and \cite{Trombetti}.  Its role here
is to connect the finite-orbit hypotheses used in this paper with the
existing theory of presentations; the independent results of the
paper are those stated in the introduction.
\end{remark}

\section{Finite-index sub-skew braces and finite generation}
\label{sec:schreier}

We now consider the direct analogue of Schreier's finite-index lemma \cite{Schreier}.
There is an immediate positive result whenever finite generation as a
skew brace can be strengthened to finite generation of one of the
underlying groups.

\begin{proposition}\label{prop:group-schreier}
Let $A$ be a finite-index sub-skew brace of a skew brace $B$.  If either
$(B,+)$ or $(B,\circ)$ is finitely generated, then $A$ is finitely
generated as a skew brace.
\end{proposition}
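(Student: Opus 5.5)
The plan is to reduce the statement to the classical Schreier lemma for groups, using Theorem~\ref{thm:index-comparison} to transfer finiteness of the index between the two group structures. The key remark is that any generating set of $(A,+)$, or of $(A,\circ)$, also generates $A$ as a skew brace. Indeed, the sub-skew brace generated by a subset $X\subseteq A$ is a subgroup of both underlying groups, so it contains $\langle X\rangle_+$ and $\langle X\rangle_\circ$. So it suffices to show that one of the groups $(A,+)$ or $(A,\circ)$ is finitely generated.

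First case: $(B,+)$ is finitely generated. Since $A$ is a finite-index sub-skew brace, at least one of $|B:A|_+$ and $|B:A|_\circ$ is finite. By Theorem~\ref{thm:index-comparison}, both are then finite. Hence $(A,+)$ is a finite-index subgroup of the finitely generated group $(B,+)$. By Schreier's lemma \cite{Schreier}, $(A,+)$ is finitely generated, with an explicit bound $1+|B:A|_+(d-1)$ if $(B,+)$ is $d$-generated. A finite additive generating set of $A$ then generates $A$ as a skew brace, by the remark above.

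Second case: $(B,\circ)$ is finitely generated. This is symmetric. Theorem~\ref{thm:index-comparison} gives $|B:A|_\circ<\infty$, so Schreier's lemma applied to $(A,\circ)\le(B,\circ)$ gives a finite multiplicative generating set of $A$. That set again generates $A$ as a skew brace.

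There is no real obstacle once Theorem~\ref{thm:index-comparison} is available. The only points to check are that finite index in one structure transfers to the structure whose generation is assumed, and that group generation implies skew brace generation. Both are immediate.
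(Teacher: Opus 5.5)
Your proposal is correct and matches the paper's proof: apply Schreier's lemma to whichever underlying group is finitely generated, then observe that a group generating set of $A$ also generates $A$ as a skew brace. Your explicit appeal to Theorem~\ref{thm:index-comparison}, which guarantees that the relevant group index is finite, makes precise a step the paper leaves implicit.
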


\begin{proof}
Assume first that $(B,+)$ is finitely generated.  Since
$|B:A|_+<\infty$, the group $(A,+)$ is finitely generated by 
Schreier's lemma.  Any additive generating set for $A$
generates $A$ as a skew brace.  The multiplicative case is identical.
\end{proof}

\begin{corollary}\label{cor:lambda-f-schreier}
Let $B$ be a finitely generated $\lambda_f$-skew brace.  Then every
finite-index sub-skew brace of $B$ is finitely generated.
\end{corollary}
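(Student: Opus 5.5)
The plan is to reduce the statement to Proposition~\ref{prop:group-schreier}. It therefore suffices to show that if $B$ is a finitely generated $\lambda_f$-skew brace, then $(B,+)$ is a finitely generated group.

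Let $X$ be a finite set generating $B$ as a skew brace. Since every element of $B$ has finite $\lambda$-orbit, the set
\[
  Y=\bigcup_{x\in X}\bigl([x]_\lambda\cup[-x]_\lambda\bigr)
\]
is finite. Let $G=\langle Y\rangle_+$.

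First, $G$ is invariant under every $\lambda_b$, because each $\lambda_b$ is an additive automorphism permuting $Y$. Hence $G$ is a left ideal of $B$.

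Second, every left ideal is a sub-skew brace. Indeed, for $g,h\in G$ we have $g\circ h=g+\lambda_g(h)\in G$, and $\bar g=\lambda_g^{-1}(-g)=\lambda_{\bar g}(-g)\in G$.

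Therefore $G$ is a sub-skew brace containing $X$, so $G=B$. Thus $(B,+)$ is generated by the finite set $Y$.

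Now let $A$ be a finite-index sub-skew brace of $B$. By Theorem~\ref{thm:index-comparison}, finiteness of either index gives finiteness of the other, so $A$ has finite additive index. Proposition~\ref{prop:group-schreier} then shows that $A$ is finitely generated as a skew brace. In fact, Schreier's lemma gives more: $(A,+)$ is finitely generated.

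The only step requiring care is the verification that the additive span of the $\lambda$-orbits is closed under $\circ$ and under multiplicative inverses. This is exactly the standard fact that left ideals are sub-skew braces, checked through the identities displayed above.
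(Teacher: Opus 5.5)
Your proof is correct and uses the same reduction as the paper: show that an underlying group of $B$ is finitely generated, then apply Proposition~\ref{prop:group-schreier}. The two proofs differ only in how that first step is obtained.

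The paper cites \cite[Theorem~4.8]{CPV}, which says that for a $\lambda_f$-skew brace, finite generation as a skew brace is equivalent to finite generation of either underlying group. You instead prove directly the one implication actually needed. The additive subgroup spanned by the finite $\lambda$-orbits of a finite generating set is $\lambda$-invariant, so it is a left ideal and hence a sub-skew brace. It contains the generators, so it is all of $B$.

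Your argument is self-contained and slightly more general, since it only needs the generators to lie in $\lambda_f(B)$, not every element. The citation additionally gives finite generation of $(B,\circ)$, which is not needed here.

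Two minor points. Including $[-x]_\lambda$ is redundant, since $[-x]_\lambda=-[x]_\lambda$, but it does no harm. Your appeal to Theorem~\ref{thm:index-comparison} simply makes explicit that ``finite index'' yields finite additive index, which the proof of Proposition~\ref{prop:group-schreier} uses.
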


\begin{proof}
By \cite[Theorem~4.8]{CPV}, finite generation of a $\lambda_f$-skew
brace is equivalent to finite generation of either underlying group.
Apply Proposition~\ref{prop:group-schreier}.
\end{proof}

Without the $\lambda_f$ condition, finite generation as a skew brace is
too weak.  We use the free right-nilpotent skew braces described in
\cite{Free}.  The sub-skew brace of elements of even additive length was already introduced in \cite[Theorem~5.5]{Free}, where it was used to prove that the corresponding free skew brace is not co-Hopfian. We retain that sub-skew brace and study its finite-generation properties.

\smallskip

Let $\mathcal{RN}_n$ denote the category of skew braces of
right-nilpotency class at most $n$, and put
\[
 B_n=\FSB_{\mathcal{RN}_n,\{x\}},
\]
the free object on one generator.  Thus $B_n$ is one-generated as a
skew brace.

We first isolate the structural input from \cite{Free}.

\begin{lemma}\label{lem:free-basis}
For $n\geq2$, let $q_n:B_n\to B_{n-1}$ be the canonical morphism
sending $x$ to $x$, and put $P_{n-1}=(B_{n-1},\circ)$.  Then
$(B_n,+)$ is the free group with basis
\[
 \mathcal Y_n=\{y_p:p\in P_{n-1}\},
 \qquad y_p=\lambda_p(x),
\]
where the indices are identified through the canonical quotient.
Moreover,
\begin{equation}\label{eq:lambda-shift}
 \lambda_b(y_p)=y_{q_n(b)\circ p}
 \qquad(b\in B_n,\ p\in P_{n-1}).
\end{equation}
\end{lemma}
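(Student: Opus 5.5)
The statement is essentially structural input imported from \cite{Free}, so the plan is to reconstruct it from the universal construction of free right-nilpotent skew braces rather than prove it from scratch. I would proceed by induction on $n$, using the description of $B_n$ in \cite{Free} as a quotient-compatible extension of $B_{n-1}$. The key observation is that right-nilpotency of class at most $n$ means $B^{(n+1)}=0$, where $B^{(k+1)}=B^{(k)}*B$. Hence $B_n/B_n^{(n)}\cong B_{n-1}$ via $q_n$, since both are free on $x$ in $\mathcal{RN}_{n-1}$ and the universal properties match.

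First, I would show that the $\lambda$-action of $(B_n,\circ)$ on the elements $\lambda_b(x)$ factors through $q_n$. The starting point is
\[
 \lambda_b(x)-x=b*x\in B_n^{(2)},
\]
and iterating this identity should show that $\lambda_b$ depends only on $b$ modulo the kernel of $q_n$ when it is applied to the orbit of $x$. I expect this step to use $B^{(n+1)}=0$ together with the formula $(b\circ k)*x=\lambda_b(k*x)+b*x$ for $k\in\ker q_n$. Once the factorization is established, $y_p=\lambda_p(x)$ is well defined for $p\in P_{n-1}$, and \eqref{eq:lambda-shift} follows immediately from the homomorphism property of $\lambda$:
\[
 \lambda_b\lambda_{\tilde p}(x)=\lambda_{b\circ\tilde p}(x).
\]

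Second, I would show that $\mathcal Y_n$ generates $(B_n,+)$. The additive subgroup generated by the $\lambda$-orbit of $x$ is a left ideal. By \eqref{eq:lambda-shift} it is also closed under $\circ$, because
\[
 u\circ v=u+\lambda_u(v).
\]
It is closed under $\circ$-inverses as well, so it is a sub-skew brace containing $x$, and therefore equals $B_n$.

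Third, I would prove freeness, which I expect to be the main obstacle. The approach is to construct a skew brace on the free group $\Phi$ with basis $\{y_p:p\in P_{n-1}\}$. The action is $\lambda_b(y_p)=y_{\pi(b)\circ p}$, where $\pi:\Phi\to P_{n-1}$ is the additive homomorphism $y_p\mapsto p$ read multiplicatively. More precisely, I would define $\pi$ recursively through the semidirect product $\Phi\rtimes P_{n-1}$ by taking the regular subgroup generated by the elements $(y_p,p)$. The following must then be checked:
\begin{enumerate}[label=\textup{(\roman*)}]
 \item this subgroup is regular;
 \item the resulting skew brace lies in $\mathcal{RN}_n$, using the induction hypothesis on $B_{n-1}$;
 \item the universal property yields an epimorphism $B_n\to\Phi$ sending each $y_p$ to the corresponding basis element.
\end{enumerate}
Combined with the generation step, this forces the additive images of the $y_p$ in $B_n$ to be free and pairwise distinct. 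In practice I would cite \cite[the structure theorem for $\FSB_{\mathcal{RN}_n}$]{Free} for this step and only verify that the indexing is compatible with $q_n$.
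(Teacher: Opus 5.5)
Your handling of the quotient and of \eqref{eq:lambda-shift} is sound and matches the paper. The paper cites \cite[Corollary~4.16]{Free} for $B_n/B_n^{(n)}\cong B_{n-1}$ with $B_n^{(n)}\subseteq\ker\lambda$, and cites \cite[Proposition~5.2]{Free} for the free basis. It then reads off $\lambda_b(y_p)=\lambda_{b\circ\widehat p}(x)$ exactly as you do. No iteration is needed for the factorization: for $k\in B_n^{(n)}$ and any $y$ one has $k*y\in B_n^{(n+1)}=0$, so $\lambda_k=\mathrm{id}$ outright. If you simply cite \cite{Free} for freeness, your argument is the paper's.

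However, the model you sketch for the freeness step fails as written. The subgroup of $\Phi\rtimes P_{n-1}$ generated by the pairs $(y_p,p)$ is not regular. Since $(y_e,e)^{-1}=(-y_e,e)$,
\[
 (y_p,p)(-y_e,e)=(y_p-y_{p\circ e},\,p)=(0,p),
\]
so the subgroup contains all of $\{0\}\times P_{n-1}$, and (i) fails. The underlying error is the normalization $\pi(y_p)=p$. In any such model, $y_e$ (the intended image of $x$) acts trivially. Then the map in (iii) sends $\lambda_x(x)=y_{q_n(x)}$ to $\lambda_{y_e}(y_e)=y_e$, not to its own basis element.

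The repair is to let $\pi$ imitate $q_n$: take the additive homomorphism $\pi:(\Phi,+)\to(B_{n-1},+)$ with $\pi(y_p)=\lambda_p(x)$.
\begin{enumerate}[label=\textup{(\roman*)}]
 \item With $p\cdot y_{p'}=y_{p\circ p'}$, checking on the basis gives $\pi(p\cdot w)=\lambda_p(\pi(w))$. Hence $\pi(w+\pi(w)\cdot w')=\pi(w)\circ\pi(w')$.
 \item So the graph $\{(w,\pi(w))\}$ is a subgroup of $\Phi\rtimes P_{n-1}$ (the inverse check is equally routine). Being a graph over $\Phi$, it is automatically regular, and $\pi$ becomes a skew brace homomorphism.
 \item Then $\pi(\Phi^{(n)})\subseteq B_{n-1}^{(n)}=0$, so $\Phi^{(n)}$ acts trivially and $\Phi^{(n+1)}=0$. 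This uses only that $B_{n-1}$ has class at most $n-1$; no induction is needed.
 \item For the universal map $f:B_n\to\Phi$ with $f(x)=y_e$, uniqueness forces $\pi f=q_n$. Hence $f(y_p)=\lambda_{f(\widehat p)}(y_e)=y_{q_n(\widehat p)}=y_p$.
\end{enumerate}
Combined with your generation step, the $y_p$ form a free basis of $(B_n,+)$ and $f$ is an isomorphism. This gives a self-contained proof of exactly the part the paper outsources to \cite[Proposition~5.2]{Free}.
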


\begin{proof}
The quotient description
$B_n/B_n^{(n)}\cong B_{n-1}$, with
$B_n^{(n)}\subseteq\ker\lambda$, is
\cite[Corollary~4.16]{Free}.  The stated free basis is
\cite[Proposition~5.2]{Free}.  If $\widehat p$ is a lift of $p$, then
\[
 \lambda_b(y_p)=\lambda_b\lambda_{\widehat p}(x)
 =\lambda_{b\circ\widehat p}(x).
\]
Since the $\lambda$-action factors through $q_n$, the index of the last
basis element is $q_n(b)\circ p$, which proves
\eqref{eq:lambda-shift}.
\end{proof}

Because $\Triv(C_2)$ belongs to $\mathcal{RN}_n$, freeness gives a
unique skew-brace epimorphism
\begin{equation}\label{eq:parity-map}
 \varepsilon_n:B_n\longrightarrow\Triv(C_2),
 \qquad \varepsilon_n(x)=1.
\end{equation}
For every $p\in P_{n-1}$,
\[
 \varepsilon_n(y_p)
 =\varepsilon_n(\lambda_p(x))=1.
\]
Thus $\varepsilon_n$ is precisely parity of the additive word length
in the basis $\mathcal Y_n$.  Uniqueness also gives
\begin{equation}\label{eq:parity-compatible}
 \varepsilon_n=\varepsilon_{n-1}q_n.
\end{equation}
Set
\[
 T_n=\ker\varepsilon_n.
\]
The sub-skew brace of even additive length considered in \cite[Theorem 5.5]{Free} is precisely $T_n$. Indeed, the preceding calculation identifies it with the kernel of the parity homomorphism $\varepsilon_n$. Consequently, $T_n$ is an ideal of $B_n$ and 
$$B_n/T_n \simeq \operatorname{Triv}(C_2),$$
so $T_n$ has index $2$ in $B_n$.

The new point here is the following failure of finite generation as a skew brace.

\begin{theorem}\label{thm:schreier-fails}
For every $n\geq3$, the one-generated skew brace $B_n$ has an ideal
$T_n$ of index $2$ which is not finitely generated as a skew brace.
\end{theorem}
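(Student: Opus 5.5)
The plan is to find an invariant of elements of $B_n$ that every finitely generated sub-skew brace of $T_n$ can only partly realize, while $T_n$ realizes all of it. The invariant comes from the additive abelianization. By Lemma~\ref{lem:free-basis}, $(B_n,+)$ is free on $\mathcal Y_n=\{y_p:p\in P_{n-1}\}$. So there is an additive epimorphism
\[
 \phi:(B_n,+)\to \Z[P_{n-1}],\qquad y_p\mapsto e_p .
\]
By \eqref{eq:lambda-shift} it satisfies $\phi(\lambda_b(u))=q_n(b)\cdot\phi(u)$, where $P_{n-1}$ acts on $\Z[P_{n-1}]$ by left multiplication. Since $a\circ b=a+\lambda_a(b)$, this gives the crucial rule
\[
 \phi(a\circ b)=\phi(a)+q_n(a)\,\phi(b),
\]
and similar formulas for additive and multiplicative inverses. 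By the description of $\varepsilon_n$ as parity of word length, $\phi(T_n)$ is exactly the submodule $E$ of elements of even augmentation, and $q_n(T_n)=T_{n-1}$ by \eqref{eq:parity-compatible}.

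Next I would fix a finite set $S\subseteq T_n$ and let $D=\langle S\rangle$ be the sub-skew brace it generates. Let $G=(q_n(D),\circ)\le T_{n-1}$ be the multiplicative group of the sub-skew brace of $B_{n-1}$ generated by $q_n(S)$. Induction on the length of a skew-brace word in $S$, using the rule above, shows that
\[
 \phi(D)\subseteq N_S:=\Z[G]\cdot\phi(S),
\]
a finitely generated $\Z[G]$-module. So it suffices to prove $E\not\subseteq N_S$. If $|P_{n-1}:G|=\infty$ this is immediate. The set $\phi(S)$ has finite support, so $N_S$ is supported on finitely many left cosets $G\circ p$, whereas $E$ contains $e_p+e_{p'}$ for every $p,p'\in P_{n-1}$ and hence meets infinitely many cosets.

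The hard case is $|P_{n-1}:G|<\infty$, in particular $G=T_{n-1}$. Here one level of abelianization cannot succeed: if $T_{n-1}$ were a finitely generated group, $E$ would be a finitely generated $\Z[T_{n-1}]$-module. Two routes remain, and I would try the first one first.

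\begin{enumerate}
 \item Show that for $n\ge 3$ the group $G$ is generated by the multiplicative subgroup generated by $q_n(S)$ together with $\lambda$-images. Then, still inside the free additive group, show that the depth in $P_{n-1}$ reachable by words in $S$ is too small to produce all of $E$. The intended witness is the family $e_p-e_{p'}$, with $p,p'$ ranging over a set that is unbounded in the $\lambda$-generated part of $P_{n-1}$; this is exactly where $n-1\ge2$ should be needed, since $P_1=\Z$ is too small. This uses Lemma~\ref{lem:free-basis} once more.
 \item Replace $\Z[P_{n-1}]$ by the finer additive quotient $(B_n,+)\to (B_n,+)/[\,\cdot,\cdot\,]$ relative to $T_n$. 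By Nielsen--Schreier, $(T_n,+)$ is free on the infinite Schreier basis
 \[
 \{\,y_1+y_p,\; y_p+y_1\,\}.
 \]
 One then abelianizes $(T_n,+)$ itself, giving $\Z[\text{basis}]$ with a $T_n$-action through $\lambda$. The same $\phi$-style recursion bounds $D$ by a finitely generated module over $\Z[G]$. The remaining task is to show that this module is not finitely generated over $\Z[T_{n-1}]$, because the two-sided index set $P_{n-1}\times P_{n-1}$ carries only the diagonal left action.
\end{enumerate}

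I expect the main obstacle to be this finite-index case: checking that the chosen abelian invariant genuinely separates $T_n$ from every finitely generated sub-skew brace. In particular, one must verify for $n\ge3$ that the orbit set $T_{n-1}\backslash(P_{n-1}\times P_{n-1})$ is infinite. That would make the second abelianization an infinitely generated module, so no finite $S$ can generate $T_n$, proving Theorem~\ref{thm:schreier-fails}.
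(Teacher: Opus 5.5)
Your reduction is sound as far as it goes. The rule $\phi(a\circ b)=\phi(a)+q_n(a)\phi(b)$ does give $\phi(\langle S\rangle)\subseteq\Z[G]\phi(S)$, and your treatment of the case $|P_{n-1}:G|=\infty$ is correct. But the finite-index case, which is the entire difficulty, is left open, and your diagnosis of it is mistaken. You set aside one level of abelianization because $E$ would be finitely generated over $\Z[T_{n-1}]$ if $T_{n-1}$ were a finitely generated group. For $n\geq3$ that premise is false, and its failure is exactly the missing idea.

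The paper's proof rests on the fact that $P_{n-1}=(B_{n-1},\circ)$ is free of infinite rank for $n\geq3$ (\cite[Sections~4--5]{Free}). Hence $H=\ker\varepsilon_{n-1}$ has infinitely many characters $\psi\colon H\to C_2$. Each yields a skew-brace homomorphism $\Phi_\psi\colon T_n\to\Triv(C_2)$ via $y_p\mapsto\eta_\psi(p)$, distinct $\psi$ give distinct $\Phi_\psi$ (evaluate on $y_h-x$), and a $d$-generated skew brace admits at most $2^d$ such homomorphisms.

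With that same fact, your own invariant closes the argument with no case split. Let $P_0\leq P_{n-1}$ be the finitely generated subgroup generated by the supports of the $\phi(s)$, $s\in S$, and let $\pi\colon\Z[P_{n-1}]\to\F_2[P_{n-1}/P_0]$ be the module map $e_p\mapsto pP_0$. Every $\phi(s)$ has even augmentation and support in $P_0$, so $\pi$ annihilates $\Z[P_{n-1}]\phi(S)\supseteq\phi(\langle S\rangle)$. On the other hand, $y_p-x\in T_n$ satisfies $\pi(\phi(y_p-x))=pP_0+P_0\neq0$ for any $p\notin P_0$. Such a $p$ exists precisely because $P_{n-1}$ is not finitely generated.

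Neither of your proposed routes supplies this. Route~1 is too imprecise to check. Route~2 misdescribes the module: $\lambda_b$ does not permute your Schreier basis. With $q=q_n(b)$ and $1$ the identity of $P_{n-1}$, in the abelianization of $(T_n,+)$ one has $[\lambda_b(y_1+y_p)]=[y_q+y_1]-[y_1+y_1]+[y_1+y_{qp}]$, so this is not the permutation module on $P_{n-1}\times P_{n-1}$ with the diagonal action. More decisively, the criterion you intend to verify, that $T_{n-1}\backslash(P_{n-1}\times P_{n-1})$ is infinite, already holds for $n=2$, where $P_1\cong\Z$. Yet $T_2$ is finitely generated by Proposition~\ref{prop:class-two-parity}, so this criterion cannot be what forces non-finite generation. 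What separates $n\geq3$ from $n=2$ is exactly whether $P_{n-1}$ is finitely generated.
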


\begin{proof}
Fix $n\geq3$, abbreviate $q=q_n$, $T=T_n$, and write
\[
 P=(B_{n-1},\circ),
 \qquad H=\ker\varepsilon_{n-1}\leq P.
\]
By \eqref{eq:parity-compatible}, $q(T)=H$.  The group $P$ is a free
group of infinite rank by the construction in \cite[Sections~4--5]{Free}.
Its index-$2$ subgroup $H$ is therefore also a free group of infinite
rank.  In particular, there are infinitely many group characters
$\psi:H\to C_2$.

Choose $r\in P\setminus H$.  Every $p\in P$ has a unique expression
$p=h\circ r^\delta$, where $h\in H$ and $\delta\in\{0,1\}$.  Given a
character $\psi:H\to C_2$, define
\[
 \eta_\psi(h\circ r^\delta)=\psi(h).
\]
For $k\in H$ and $p\in P$ one then has
\begin{equation}\label{eq:eta-translation}
 \eta_\psi(k\circ p)=\psi(k)+\eta_\psi(p).
\end{equation}

Since $(B_n,+)$ is free on $\mathcal Y_n$, the rule
\[
 \widetilde\Phi_\psi(y_p)=\eta_\psi(p)
\]
extends uniquely to a group homomorphism
$\widetilde\Phi_\psi:(B_n,+)\to C_2$.  If $b\in T$, then
$q(b)\in H$, so \eqref{eq:lambda-shift} and
\eqref{eq:eta-translation} give, on every free generator $y_p$,
\[
 \widetilde\Phi_\psi(\lambda_b(y_p))
 =\widetilde\Phi_\psi(y_p)+\psi(q(b))\varepsilon_n(y_p).
\]
Both sides extend as additive homomorphisms.  Hence, for every
$w\in B_n$,
\begin{equation}\label{eq:Phi-correction}
 \widetilde\Phi_\psi(\lambda_b(w))
 =\widetilde\Phi_\psi(w)+\psi(q(b))\varepsilon_n(w).
\end{equation}
On $T$ the correction term vanishes.  Therefore the restriction
\[
 \Phi_\psi=\widetilde\Phi_\psi|_T:T\longrightarrow C_2
\]
is invariant under every $\lambda_b$ with $b\in T$.  For $a,t\in T$,
\[
 \Phi_\psi(a\circ t)
 =\Phi_\psi(a+\lambda_a(t))
 =\Phi_\psi(a)+\Phi_\psi(t).
\]
Thus $\Phi_\psi:T\to\Triv(C_2)$ is a skew-brace homomorphism.

The map $\psi\mapsto\Phi_\psi$ is injective.  Indeed, if $e$ denotes
the identity of $P$ and $h\in H$, then
\[
 y_h-y_e\in T
 \qquad\text{and}\qquad
 \Phi_\psi(y_h-y_e)=\psi(h).
\]
Consequently, $T$ admits infinitely many skew-brace homomorphisms to
$\Triv(C_2)$.  If $T$ were generated as a skew brace by $d$ elements,
such a homomorphism would be determined by the $d$ images of those
generators, so there could be at most $2^d$ of them.  This
contradiction proves that $T$ is not finitely generated.
\end{proof}

The theorem separates two notions of finite generation which might
otherwise be conflated.

\begin{corollary}\label{cor:ideal-generated}
For every $n\geq3$, the ideal $T_n$ is finitely generated as an ideal
of $B_n$, but it is not finitely generated as a skew brace.
\end{corollary}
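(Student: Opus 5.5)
The second half of the statement, that $T_n$ is not finitely generated as a skew brace, is exactly Theorem~\ref{thm:schreier-fails}. So the only new work is to show that $T_n$ is finitely generated as an ideal of $B_n$. The plan is to give an explicit finite set of elements of $T_n$ and show that the ideal $I$ they generate has $B_n/I$ of order at most $2$. Since $I\subseteq T_n$ and $|B_n:T_n|=2$, this forces $I=T_n$.

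A natural candidate is
\[
 I=\langle 2x,\; x\circ x - 2x,\; \lambda_x(x)-x\rangle_{\mathrm{ideal}},
\]
or possibly the simpler $\langle 2x,\ \lambda_x(x)-x\rangle_{\mathrm{ideal}}$. Each generator has even additive length in the basis $\mathcal Y_n$, since $\lambda_x(x)=y_{q_n(x)}$ is a single basis element. Hence each generator lies in $T_n=\ker\varepsilon_n$, and so $I\subseteq T_n$.

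Now consider the quotient $Q=B_n/I$. It is generated as a skew brace by the image $\bar x$ of $x$.
\begin{itemize}
\item The relation $\lambda_{x}(x)=x$ holds in $Q$. Because $\lambda$ is a homomorphism, $\lambda_{\bar x}$ fixes $\bar x$, and so does $\lambda_{\bar x}^{-1}=\lambda_{\bar x^{-1}}$.
\item Every element of $Q$ is obtained from $\bar x$ by iterating $+$, $-$, $\circ$ and inverses. All of these are expressible through $+$ and the maps $\lambda_{\bar x}^{\pm1}$.
\item Therefore the additive subgroup $\langle \bar x\rangle_+$ is closed under $\lambda_{\bar x}^{\pm 1}$. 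It is then closed under $\circ$ via $a\circ b=a+\lambda_a(b)$, because $\lambda_a$ for $a\in\langle\bar x\rangle_+$ is a power of $\lambda_{\bar x}$. This last point needs a small induction, using $\lambda_{a+b}=\lambda_{a\circ \lambda_a^{-1}(b)}$.
\end{itemize}
Hence $Q=\langle\bar x\rangle_+$, and the relation $2\bar x=0$ gives $|Q|\le 2$.

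The main obstacle is the induction in the last bullet. One has to verify that $\lambda_{k\bar x}$ is a power of $\lambda_{\bar x}$ for every integer $k$. I would prove this by induction on $k$, writing $(k+1)\bar x=k\bar x\circ\lambda_{k\bar x}^{-1}(\bar x)=k\bar x\circ\bar x$, where the second equality uses $\lambda_{\bar x}(\bar x)=\bar x$. This gives $\lambda_{(k+1)\bar x}=\lambda_{k\bar x}\lambda_{\bar x}$; negative $k$ is handled similarly.

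If this works, the generators $2x$ and $\lambda_x(x)-x$ already suffice. That gives the finite ideal generation explicitly, and recovers the lemma of Trombetti cited in the introduction.
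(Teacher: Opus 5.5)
Your proof is correct, but it takes a different route from the paper for the first assertion. The paper's proof of that assertion is a single citation: $B_n$ is one-generated and $B_n/T_n\cong\Triv(C_2)$ is finite, hence finitely presented, so \cite[Lemma~3.1]{Trombetti} gives that $T_n$ is finitely generated as an ideal. You instead verify this instance by hand.

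\begin{itemize}
\item Both $2x$ and $\lambda_x(x)-x$ have even parity, so the ideal $I$ they generate lies in $T_n$.
\item Any skew brace generated by an element $z$ with $2z=0$ and $\lambda_z(z)=z$ has order at most $2$.
\item Hence $|B_n:I|\le 2=|B_n:T_n|$, and with $I\subseteq T_n$ this forces $I=T_n$.
\end{itemize}

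In effect you check that these two relators present $\Triv(C_2)$ on one generator, and so unwind Trombetti's lemma in this case. Your route buys an explicit two-element ideal generating set and independence from the external reference. The third candidate $x\circ x-2x$ is redundant. The paper's route buys brevity, and it applies unchanged to any finitely presented quotient.

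You can drop the induction on $k$. Once $2\bar x=0$, note that $\bar x\circ\bar x=\bar x+\lambda_{\bar x}(\bar x)=2\bar x=0$. So the finite set $\{0,\bar x\}$ is closed under $\circ$, hence is a subgroup of $(Q,\circ)$. It is therefore a sub-skew brace containing $\bar x$, and so equals $Q$.

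Your second bullet is loosely phrased. Expressing $\circ$ requires $\lambda_a$ for arbitrary $a$, not only $\lambda_{\bar x}^{\pm1}$. Your third bullet, or the remark above, covers this.

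Finally, your closing sentence overstates what you have shown. You recover only this instance of Trombetti's lemma, not the lemma itself.
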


\begin{proof}
The skew brace $B_n$ is one-generated and $B_n/T_n\cong\Triv(C_2)$ is
finite, hence finitely presented.  Trombetti's
\cite[Lemma~3.1]{Trombetti} implies that $T_n$ is finitely generated as
an ideal of $B_n$.  The second assertion is
Theorem~\ref{thm:schreier-fails}.
\end{proof}

\begin{remark}
Thus neither ``strong left ideal'' nor even ``ideal'' can replace the
group-theoretic subgroup hypothesis in a finite-index Schreier theorem
for generation as a subbrace.  The obstruction already occurs at
index $2$, in a one-generated right-nilpotent skew brace of class $3$.
This does not settle the broader Nielsen--Schreier problem asking
whether subbraces of a free skew brace are free.
\end{remark}

The parity family has a sharp change between classes $2$ and $3$. The explicit model used below is taken from \cite[Theorem 5.7]{Free}, whereas the finite generating set is established here. 

\begin{proposition}\label{prop:class-two-parity}
The parity ideal $T_2$ of $B_2$ is generated as a skew brace by the four
elements
\[
 x_1-x_0,\qquad x_0+x_0,\qquad x_0+x_1,\qquad x_0-x_2.
\]
\end{proposition}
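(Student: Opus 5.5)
We would prove both inclusions, using the explicit model of $B_2$ from \cite[Theorem~5.7]{Free} as recast by Lemma~\ref{lem:free-basis}. For $n=2$ we have $P_1=(B_1,\circ)$. Since $B_1$ is the free right-nilpotent skew brace of class $1$ on one generator, it is the trivial brace $\Triv(\Z)$, so $P_1\cong\Z$ with generator the image of $x$. Write $x_k=y_{x^{\circ k}}$ for $k\in\Z$. Lemma~\ref{lem:free-basis} then says that $(B_2,+)$ is free on $\{x_k:k\in\Z\}$ with $x=x_0$, and \eqref{eq:lambda-shift} becomes
\[
 \lambda_b(x_k)=x_{k+m(b)},
\]
where $m\colon (B_2,\circ)\to\Z$ is $q_2$ followed by the identification $P_1\cong\Z$. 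Because $q_2(b)$ is a sum of images of basis elements, each equal to the generator of $\Z$, $m(b)$ is the exponent sum of $b$ as a word in the $x_k$. Its parity is therefore $\varepsilon_2(b)$. The first step is to check that this description agrees with the model in \cite[Theorem~5.7]{Free}. I expect this identification to be the main obstacle, since everything else is bookkeeping in a free group. In particular, elements of $T_2$ act by shifts through even integers.

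Let $S$ be the sub-skew brace generated by the four listed elements. Each of them has additive length $2$, so each lies in $T_2$, and since $T_2$ is a sub-skew brace we get $S\subseteq T_2$. For the reverse inclusion, we first note that $S$ is stable under $\lambda_s$ for every $s\in S$, because $\lambda_s(u)=-s+s\circ u$. The values of $m$ on the generators are $m(x_0+x_0)=m(x_0+x_1)=2$ and $m(x_1-x_0)=m(x_0-x_2)=0$. Hence $S$ contains elements with $m=\pm2$, and applying their $\lambda$-maps and powers of them shows that $S$ is closed under the shifts $x_k\mapsto x_{k+2j}$ for every $j\in\Z$. Applying these shifts to the generators, $S$ contains
\[
 x_{2j+1}-x_{2j},\qquad x_{2j}+x_{2j},\qquad x_{2j}+x_{2j+1},\qquad x_{2j}-x_{2j+2}\qquad(j\in\Z).
\]

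Next we use the Reidemeister--Schreier generators of the index-$2$ subgroup $T_2$ of the free group $(B_2,+)$, taken with respect to the transversal $\{0,x_0\}$. The subgroup $T_2$ of even-length words is generated additively by $x_k-x_0$ and $x_0+x_k$ for all $k\in\Z$, so it suffices to produce these elements in $S$.
\begin{itemize}
 \item Telescoping sums of the elements $x_{2i}-x_{2i+2}$, together with their negatives, give $x_{2j}-x_0$ for every $j\in\Z$.
 \item Adding $x_{2j+1}-x_{2j}$ and $x_{2j}-x_0$ gives $x_{2j+1}-x_0$.
 \item We have $x_0+x_{2j+1}=(x_0-x_{2j})+(x_{2j}+x_{2j+1})$, where $x_0-x_{2j}=-(x_{2j}-x_0)$ is already in $S$.
 \item We have $x_0+x_{2j}=(x_0-x_{2j})+(x_{2j}+x_{2j})$.
\end{itemize}
Therefore all the Reidemeister--Schreier generators lie in $S$, so $T_2\subseteq S$ and hence $S=T_2$.
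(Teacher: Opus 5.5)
Your proposal is correct and follows the paper's proof essentially step for step. You shift the four generators by $\lambda_{x_0+x_0}=\vartheta^2$ and its inverse, telescope the elements $x_{2i}-x_{2i+2}$ to get $x_{2j}-x_0$, and then build the Reidemeister--Schreier generators $x_k-x_0$, $x_0+x_k$ for the transversal $\{0,x_0\}$; the only difference is that you derive the model $\lambda_w=\vartheta^{\nu(w)}$ from Lemma~\ref{lem:free-basis} with $B_1=\Triv(\Z)$ rather than citing \cite[Theorem~5.7]{Free}, which is immediate and not a real obstacle.
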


\begin{proof}
We use the model of \cite[Theorem~5.7]{Free}.  The additive group of
$B_2$ is the free group $F(x_i\mid i\in\Z)$.  If
\[
 \nu:F(x_i\mid i\in\Z)\longrightarrow\Z,
 \qquad \nu(x_i)=1,
\]
and $\vartheta(x_i)=x_{i+1}$, then
\[
 \lambda_w=\vartheta^{\nu(w)}.
\]
The parity ideal is $T_2=\ker(\nu\bmod2)$.

Let $C$ be the sub-skew brace generated by
\[
 a=x_1-x_0,\quad b=x_0+x_0,\quad
 c=x_0+x_1,\quad d=x_0-x_2.
\]
All four elements have even parity, so $C\subseteq T_2$.  Since
$\lambda_b=\vartheta^2$, closure under the brace operations gives
\[
 \vartheta^{2k}(z)=\lambda_{b^{\circ k}}(z)\in C
 \qquad(z\in C,\ k\in\Z).
\]
It follows that $C$ contains, for every $k\in\Z$,
\begin{align*}
 a_k&=x_{2k+1}-x_{2k},&
 b_k&=x_{2k}+x_{2k},\\
 c_k&=x_{2k}+x_{2k+1},&
 d_k&=x_{2k}-x_{2k+2}.
\end{align*}

Put $u_i=x_i-x_0$.  Starting from $u_0=0$, the recurrences
\[
 u_{2k+2}=-d_k+u_{2k},
 \qquad
 u_{2k}=d_k+u_{2k+2}
\]
show in both directions that $u_{2k}\in C$ for every $k$.  Then
\[
 u_{2k+1}=a_k+u_{2k}\in C.
\]
If $v_i=x_0+x_i$, we also have
\[
 v_{2k}=-u_{2k}+b_k,
 \qquad
 v_{2k+1}=-u_{2k}+c_k,
\]
so every $v_i$ belongs to $C$.

A Reidemeister--Schreier calculation for the kernel of
$\nu\bmod2$, using the transversal $\{0,x_0\}$, gives the additive
Schreier basis
\[
 \{u_i:i\in\Z,\ i\neq0\}\ \cup\ \{v_i:i\in\Z\}
\]
of $T_2$.  Hence $(T_2,+)\subseteq C$, and therefore $T_2=C$.
\end{proof}

\begin{remark}
For completeness, $B_1$ is the trivial brace on the infinite cyclic
group and $T_1=2\Z$ is one-generated.  Thus, for these canonical parity
ideals, class $1$ is one-generated, class $2$ is finitely generated by
the explicit set above, and every class $n\geq3$ is not finitely
generated as a skew brace.
\end{remark}

\vspace{2cm}
\paragraph{Acknowledgements.}
The authors are members of the \emph{National Group for Algebraic and Geometric Structures, and their Applications} (GNSAGA--INdAM), and of the non-profit association \emph{Advances in Group Theory and Applications}.

\newpage
{
\sloppy
\noindent
Massimiliano Di Matteo

\noindent
Dipartimento di Matematica e Fisica

\noindent
Università degli Studi della Campania  ``Luigi Vanvitelli''

\noindent
viale Lincoln 5, Caserta (Italy)

\noindent
e-mail: massimiliano.dimatteo@unicampania.it 
}

\bigskip\bigskip\bigskip
\bigskip

{
\sloppy
\noindent
Maria Ferrara

\noindent
Dipartimento di Ingegneria

\noindent
Facoltà di Ingegneria e Informatica

\noindent
Università Pegaso\\
Centro Direzionale Isola F2 - Napoli (Italy)

\noindent
e-mail: maria.ferrara1@unipegaso.it 

}

\end{document}